\newcommand{\andSep}{\,\,\,\text{ and }\,\,\,}
\newcommand{\ca}{$C^*$-algebra}
\DeclareMathOperator{\linspan}{span}
\DeclareMathOperator{\interior}{int_D}
\DeclareMathOperator{\exterior}{cl_D}
\newcommand{\Imin}{I_{\mathrm{min}}}
\newcommand{\Imax}{I_{\mathrm{max}}}
\def\today{\number\day\space\ifcase\month\or   January\or February\or
   March\or April\or May\or June\or   July\or August\or September\or
   October\or November\or December\fi\   \number\year}
\newtheorem{lma}{Lemma}[section]
\newaliascnt{thmCt}{lma}
\newtheorem{thm}[thmCt]{Theorem}
\newaliascnt{corCt}{lma}
\newtheorem{cor}[corCt]{Corollary}
\newaliascnt{prpCt}{lma}
\newtheorem{prp}[prpCt]{Proposition}
\theoremstyle{definition}
\newaliascnt{dfnCt}{lma}
\newaliascnt{rmkCt}{lma}
\newtheorem{rmk}[rmkCt]{Remark}
\newaliascnt{exaCt}{lma}
\newtheorem{exa}[exaCt]{Example}
\newaliascnt{qstCt}{lma}
\newtheorem{qst}[qstCt]{Question}
\newaliascnt{pgrCt}{lma}
\newtheorem{pgr}[pgrCt]{}
\newcounter{theoremintro}
\newaliascnt{thmIntroCt}{theoremintro}
\newtheorem{thmIntro}[thmIntroCt]{Theorem}
\newaliascnt{prpIntroCt}{theoremintro}
\newaliascnt{corIntroCt}{theoremintro}
\newtheorem{corIntro}[corIntroCt]{Corollary}
\newaliascnt{qstIntroCt}{theoremintro}
\title{Prime and semiprime Lie ideals in C*-algebras}
\author[Eusebio Gardella]{Eusebio Gardella}
\address{Eusebio Gardella,
Department of Mathematical Sciences, Chalmers University of
Technology and University of Gothenburg, Gothenburg SE-412 96, Sweden.}
\email{gardella@chalmers.se}
\urladdr{www.math.chalmers.se/~gardella}
\author{Kan Kitamura}
\address{Kan Kitamura, 
Center for Interdisciplinary Theoretical and Mathematical Sciences, RIKEN, 2-1 Hirosawa, Wako, Saitama 351-0198 Japan.}
\email{kan.kitamura@riken.jp}
\author{Hannes Thiel}
\address{Hannes~Thiel, 
Department of Mathematical Sciences, Chalmers University of Technology and University of
Gothenburg, Gothenburg SE-412 96, Sweden.}
\email{hannes.thiel@chalmers.se}
\urladdr{www.hannesthiel.org}
\thanks{
The first named author was partially supported by the Swedish Research Council Grant 2021-04561.
The second named author was partially supported by JSPS KAKENHI Grant Number JP25K17272 and RIKEN Special Postdoctoral Researcher Program.
The third named author was partially supported by the Knut and Alice Wallenberg Foundation (KAW 2021.0140).
}
\subjclass[2020]%
{Primary
46L05. % General theory of C*-algebras
Secondary
16N60, % Prime and semiprime associative rings
16W10, % Rings with involution; Lie, Jordan and other nonassociative structures
17B60. % Lie (super)algebras associated with other structures (associative, Jordan, etc.)
%47B47. % Commutators, derivations, elementary operators,
}
\keywords{Lie ideals, prime ideals, semiprime ideals, $C^*$-algebras}
\date{\today}
\begin{document}

%==========================================================================================
\begin{abstract}
Using the theory of Dixmier ideals developed in previous
work, we show that every semiprime Lie ideal in a \ca{} arises as the full normalizer subspace of a semiprime two-sided ideal.
This leads to a concise description of all semiprime Lie ideals in terms of semiprime two-sided ideals, and an analogous description of prime Lie ideals in terms of prime two-sided ideals.

For unital \ca{s} without characters, we obtain a natural bijection between (semi)prime two-sided ideals and (semi)prime Lie ideals, and it follows that a Lie ideal is fully noncentral if and only if it is not contained in any prime Lie ideal.
\end{abstract}	

\maketitle

%==========================================================================================
%==========================================================================================
\section{Introduction}

%==========================================================================================
Every \ca{} $A$ carries a natural Lie algebra structure with the Lie product of operators $a,b \in A$ given by their additive commutator $[a,b] := ab-ba$.
A \emph{Lie ideal} in $A$ is a linear subspace $L \subseteq A$ such that $[A,L] \subseteq L$.
Obvious examples of Lie ideals are all of $A$, its center $Z(A)$, and the commutator subspace $[A,A]$.
Here, we follow the usual convention that $[E,F]$ and $EF$ denote the linear subspaces of $A$ generated by $\{[a,b] : a \in E, b \in F\}$ and $\{ab : a \in E, b \in F \}$, respectively.
We emphasize that we study Lie ideals that are not necessarily closed.

We refer to Section~2 in \cite{Mar10ProjCommutLieIdls} for an introduction to the extensive study of Lie ideals in \ca{s}.
Let us mention just a few results in the case that~$A$ is a simple, unital \ca{}.
First, as a consequence of Herstein's description of Lie ideals in simple rings \cite{Her55LieJordanSimpleRing}, a subspace of $A$ is a Lie ideal if and only if it is either contained in the center $Z(A)$ or it contains $[A,A]$.
If $A$ admits no tracial states (for example, if $A$ is a simple, purely infinite \ca{}), then $A=[A,A]$ by \cite{Pop02FiniteSumsCommutators} and it follows that $A$ has precisely three Lie ideals: $\{0\}$, $Z(A)$ and $A$.
If $A$ admits a unique tracial state, then $\overline{[A,A]}$ is a closed subspace of codimension one by \cite{CunPed79EquivTraces}, and $A$ has precisely four closed Lie ideals: $\{0\}$, $Z(A)$, $\overline{[A,A]}$ and $A$.
If $A$ is also pure (\cite{AntPerThiVil24arX:PureCAlgs}) and exact, then $[A,A]$ is closed by \cite{NgRob16CommutatorsPureCa}, and it follows that the only Lie ideals are $\{0\}$, $Z(A)$, $[A,A]$ and $A$.
This applies for example to UHF-algebras, irrational rotation algebras, the Jiang-Su algebra, as well as many reduced group \ca{s} \cite{AmrGaoElaPat24arX:StrCompRedGpCAlgs}.

When studying Lie ideals in non-simple \ca{s}, it is natural that the lattice of two-sided ideals enters the picture.
In this paper, by a two-sided ideal in a \ca{} $A$ we mean a linear subspace $I \subseteq A$ such that $ax,xa \in I$ for all $a \in A$ and $x \in I$.
We warn the reader that this terminology deviates from that in \cite{GarKitThi23arX:SemiprimeIdls}, where we used the ring-theoretic notion (an additive subgroup $I \subseteq A$ such that $ax,xa \in I$ for all $a \in A$ and $x \in I$).
A ring-theoretic ideal is automatically a linear subspace if $A$ is unital or if the ideal is semiprime (\cite[Corollary~5.5]{GarKitThi23arX:SemiprimeIdls}), but not in general \cite[Examples~II.5.2.1]{Bla06OpAlgs}.
A systematic investigation of the connection between Lie ideals and two-sided ideals in a \ca{} was initiated in \cite{BreKisShu08LieIdeals}.

Primeness and semiprimeness of ideals in Lie algebras are defined analogous to ideals in associative rings;
see, for example, \cite{BroMcC58PrimeIdlsNonassoc, Kaw74PrimeIdlsLie}.
Concretely, a Lie ideal $L$ in a \ca{} $A$ is said to be \emph{prime} if $L \neq A$ and if for all Lie ideals $K_1,K_2 \subseteq A$ with $[K_1,K_2] \subseteq L$ we have $K_1 \subseteq L$ or $K_2 \subseteq L$.
Further, a Lie ideal $L \subseteq A$ is \emph{semiprime} if for every Lie ideal $K \subseteq A$ with $[K,K] \subseteq L$ we have $K \subseteq L$.

\medskip

In this paper we describe prime and semiprime Lie ideals in a \ca{}~$A$ by establishing a correspondence with certain prime and semiprime two-sided ideals.
More specifically, if $L$ is a subspace in $A$, we consider its \emph{full normalizer subspace}
\[
T(L) := \big\{ a \in A \colon [A,a] \subseteq L \big\}.
\]
This construction was implicitly introduced by Herstein in \cite{Her55LieJordanSimpleRing}, and it played a key role in the study of Lie ideals in \cite{BreKisShu08LieIdeals}, where it is denoted by $N(L)$.

If $I$ is a semiprime two-sided ideal, we show in \autoref{prp:FullNormalizer} that $T(I)$ is a semiprime Lie ideal, and every semiprime Lie ideal arises this way; see \autoref{prp:ArisingFromIdeal}.
A crucial ingredient to obtain these results is the theory of powers of Dixmier ideals which was recently developed by the authors in \cite{GarKitThi23arX:SemiprimeIdls}.
We obtain a surjective assignment
\[
\big\{ \text{semiprime, two-sided ideals in $A$} \big\}
\xrightarrow{I \mapsto T(I)}
\big\{ \text{semiprime Lie ideals in $A$} \big\}.
\]

In general, this assignment is not injective.
For example, if $A$ is commutative, then $T(I)=A$ for every two-sided ideal~$I$, and~$A$ is the only semiprime Lie ideal in~$A$.
To remedy this, for a given semiprime Lie ideal $L$ we consider the (unique) largest two-sided ideal~$I$ in~$A$ satisfying $I \subseteq L$.
Among other things, we show that this two-sided ideal is semiprime and it can be characterized by the property that $A/I$ contains no nonzero, commutative, two-sided ideals.
The following is \autoref{prp:Correspondence}.

%==========================================================================================
\begin{thmIntro}
Let $A$ be a \ca.
There is a natural, bijective correspondence 
\[
\left\{\ \parbox{7cm}{ semiprime two-sided ideals $I$ in $A$ such that $A/I$ contains no nonzero, commutative ideal } \ \right\}
\xrightarrow{I \mapsto T(I)}
\left\{ \ \parbox{2.2cm}{ semiprime Lie ideals in $A$ } \ \right\}.
\]
This assignment restricts to a bijection between prime two-sided ideals~$I$ in~$A$ such that~$A/I$ contains no nonzero, commutative, two-sided ideal, and prime Lie ideals in~$A$.
\end{thmIntro}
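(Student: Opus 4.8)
The plan is to construct an explicit inverse to $I \mapsto T(I)$ and to verify that the two assignments are mutually inverse, inclusion-preserving bijections between the two posets; the prime statement will then follow by checking that primeness is preserved in both directions. Since the surjectivity of $I \mapsto T(I)$ onto semiprime Lie ideals is already available, the real work is to cut the domain down to the prescribed subclass and to produce the inverse. For a Lie ideal $L$, let $I(L)$ denote the largest two-sided ideal contained in $L$; this is well defined since the sum of all two-sided ideals contained in $L$ is again a two-sided ideal contained in $L$. Both $I \mapsto T(I)$ and $L \mapsto I(L)$ are inclusion-preserving. I would first record the auxiliary fact that $T(L) = L$ for every semiprime Lie ideal $L$: the inclusion $L \subseteq T(L)$ holds for any Lie ideal, and if $a \in T(L)$, then $[A,a] \subseteq L$ shows that $K := L + \mathbb{C}a$ is a Lie ideal with $[K,K] \subseteq [L,L] + [A,a] \subseteq L$, so semiprimeness gives $K \subseteq L$, i.e.\ $a \in L$.

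Next I would prove that the two composites are the identity. For the domain, let $I$ be semiprime with $A/I$ having no nonzero commutative ideal. Then $I \subseteq T(I)$, so $I \subseteq I(T(I))$; conversely $I(T(I))$ is a two-sided ideal contained in $T(I)$, hence $[A, I(T(I))] \subseteq I$, so $I(T(I))/I$ is a central, in particular commutative, ideal of $A/I$ and therefore zero. Thus $I(T(I)) = I$. For the codomain, let $L$ be a semiprime Lie ideal and write $L = T(J)$ with $J$ semiprime, using \autoref{prp:ArisingFromIdeal}. Since $J \subseteq T(J) = L$ and $J$ is two-sided, $J \subseteq I(L)$; monotonicity together with $T(L) = L$ then gives $L = T(J) \subseteq T(I(L)) \subseteq T(L) = L$, so $T(I(L)) = L$.

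It remains to check that $L \mapsto I(L)$ lands in the prescribed domain. For semiprimeness of $I(L)$: if $K$ is a two-sided ideal with $K^2 \subseteq I(L)$, then $[K,K] \subseteq K^2 \subseteq I(L) \subseteq L$, so the Lie ideal $K$ satisfies $K \subseteq L$ by semiprimeness of $L$, whence $K \subseteq I(L)$. For the absence of commutative quotient ideals, I would recall that a commutative two-sided ideal of a \ca{} is automatically central; hence any two-sided ideal $K \supseteq I(L)$ with $K/I(L)$ commutative satisfies $[A,K] \subseteq I(L) \subseteq L$, giving $K \subseteq T(L) = L$ and therefore $K \subseteq I(L)$, so $K/I(L) = 0$. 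Together with \autoref{prp:FullNormalizer}, this establishes the semiprime bijection.

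Finally, the prime refinement amounts to showing that, for $I$ in the domain and $L = T(I)$, the ideal $I$ is prime if and only if $L$ is prime. The implication ``$L$ prime $\Rightarrow$ $I$ prime'' is straightforward: if $K_1 K_2 \subseteq I$ for two-sided ideals $K_i$, then semiprimeness of $I$ yields $K_2 K_1 \subseteq I$ as well (since $(K_2 K_1)^2 \subseteq K_2 I K_1 \subseteq I$), so $[K_1,K_2] \subseteq I \subseteq L$; primeness of the Lie ideal $L$ gives $K_1 \subseteq L$ or $K_2 \subseteq L$, and being two-sided this forces $K_1 \subseteq I$ or $K_2 \subseteq I$. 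The reverse direction is what I expect to be the main obstacle: from $[K_1,K_2] \subseteq T(I)$, equivalently $[A,[K_1,K_2]] \subseteq I$, one must pass to a product containment $J_1 J_2 \subseteq I$ for the two-sided ideals $J_i$ generated by the Lie ideals $K_i$, so that primeness of $I$ can be applied directly. This translation between Lie brackets of Lie ideals and products of their generated two-sided ideals is precisely where the Herstein-type commutator identities and the theory of powers of Dixmier ideals from \cite{GarKitThi23arX:SemiprimeIdls} should enter, and I anticipate it to be the technical heart of the proof. As a possible alternative, one can attempt to characterize prime elements as the meet-irreducible ones in each lattice of semiprime ideals and then transport meet-irreducibility across the order isomorphism, using that $L \mapsto I(L)$ preserves intersections.
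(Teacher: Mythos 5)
Your semiprime half is essentially sound and runs close to the paper's own argument: the identity $T(L)=L$ for semiprime Lie ideals, the two composite computations $I(T(I))=I$ and $T(I(L))=L$, and the elementary verification that $I(L)$ is semiprime (via $[K,K]\subseteq K^2\subseteq I(L)\subseteq L$) are all correct, and the last of these is even slightly more direct than the paper's route. One repair is needed, though: you justify the step ``$K/I(L)$ commutative $\Rightarrow$ $K/I(L)$ central'' by the fact that a commutative two-sided ideal of a \ca{} is central, but this fact is being applied in the quotient $A/I(L)$, which is \emph{not} a \ca{} in general since $I(L)$ need not be closed. What you need is the ring-theoretic version for semiprime rings without $2$-torsion (\cite[Lemma~1]{Her70LieStructure}), which is available because you have just proved $I(L)$ semiprime; alternatively you can bypass centrality entirely: if $K_0\subseteq A/I(L)$ is a commutative ideal and $K=\pi^{-1}(K_0)$, then $[K,K]\subseteq I(L)\subseteq L$, so $K\subseteq L$ by Lie semiprimeness and $K\subseteq I(L)$ by maximality, whence $K_0=\{0\}$.

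The genuine gap is in the prime statement, in the direction ``$I$ prime $\Rightarrow$ $T(I)$ is a prime Lie ideal,'' which you explicitly leave open as the anticipated technical heart. Moreover, the route you sketch for it would fail: from $[K_1,K_2]\subseteq T(I)$ one cannot hope to deduce $J_1J_2\subseteq I$ for the two-sided ideals $J_i$ generated by the $K_i$. For instance, if $A$ is unital and $K_1=K_2=Z(A)$, then $[K_1,K_2]=\{0\}\subseteq T(I)$ while $J_1=J_2=A$, so $J_1J_2\not\subseteq I$; the correct conclusion of Lie primeness is only $K_i\subseteq T(I)$, which is strictly weaker than $J_i\subseteq I$, so any translation to products must be done \emph{modulo the center}. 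This is exactly how the paper proceeds, and no Dixmier-ideal theory enters at this point: pass to the prime, $2$-torsion-free ring $A/I$, where $[\pi(K_1),\pi(K_2)]\subseteq Z(A/I)$, and apply Lanski--Montgomery's lemma \cite[Lemma~7]{LanMon72LieStrPrimeChar2} to conclude $\pi(K_1)\subseteq Z(A/I)$ or $\pi(K_2)\subseteq Z(A/I)$, that is, $K_1\subseteq T(I)$ or $K_2\subseteq T(I)$; properness $T(I)\neq A$ holds because $A/I$ is nonzero and has no nonzero commutative two-sided ideals, hence is nonabelian. This is precisely part~(2) of \autoref{prp:FullNormalizer}, which you already cite for the semiprime part but do not invoke here. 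Your converse direction (``$L$ prime $\Rightarrow$ $I(L)$ prime'') is correct and matches the paper, and your fallback via meet-irreducibility in the two lattices could be made to work, but as written it is only a sketch and would still require proving the equivalence of primeness with meet-irreducibility for proper semiprime ideals in both settings.
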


%==========================================================================================
If a \ca{} is generated by its commutators as a ring (for example, if it is unital and has no characters \cite{GarThi25GenByCommutators}), then quotients by semiprime two-sided ideals cannot contain nonzero, commutative two-sided ideals.
We deduce the following:

%==========================================================================================
\begin{corIntro}[{\autoref{prp:GenByCommutators}}]
Let $A$ be a \ca{} that is generated by its commutators as a ring.
There is a natural, bijective correspondence 
\[
\big\{ \text{semiprime two-sided ideals in $A$} \big\}
\xrightarrow{I \mapsto T(I)}
\big\{ \text{semiprime Lie ideals in $A$} \big\}.
\]
This assignment restricts to a bijection between prime two-sided ideals and prime Lie ideals.
\end{corIntro}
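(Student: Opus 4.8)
The plan is to deduce \autoref{prp:GenByCommutators} directly from \autoref{prp:Correspondence}, by showing that the hypothesis ``$A$ is generated by its commutators as a ring'' forces the side condition in the domain of that correspondence to become automatic. Indeed, \autoref{prp:Correspondence} already provides a bijection $I \mapsto T(I)$ from those semiprime two-sided ideals $I$ for which $A/I$ contains no nonzero commutative two-sided ideal onto all semiprime Lie ideals, and restricts to the corresponding bijection on the prime level. Hence it suffices to prove that, when $A$ is generated by its commutators as a ring, \emph{every} semiprime two-sided ideal $I$ has the property that $A/I$ contains no nonzero commutative two-sided ideal. Once this is established, the domain of the correspondence collapses to the set of all semiprime (respectively, all prime) two-sided ideals, which is exactly the asserted bijection.

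To set up the reduction, I would record two stability properties of the quotient $B := A/I$. Since the quotient map $A \to B$ is a surjective ring homomorphism carrying commutators to commutators, $B$ is again generated by its commutators as a ring. Since $I$ is semiprime, $B$ is a semiprime ring, which I will use in the standard equivalent form that $bBb = \{0\}$ implies $b = 0$. The corollary is thereby reduced to the following purely ring-theoretic statement, which I would isolate as a lemma: \emph{a semiprime ring $B$ that is generated by its commutators as a ring contains no nonzero commutative two-sided ideal.}

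For the lemma, let $J \subseteq B$ be a commutative two-sided ideal; I would argue in two steps. First I would show that $J$ is central. Given $a \in J$ and $r \in B$, the elements $ar$ and $ra$ again lie in $J$, so commutativity of $J$ yields the identities $a^2 r = ara = ra^2$ (so that $a^2$ is central) together with $a[a,r] = 0 = [a,r]a$. Linearizing $a[a,r]=0$ in $a$ over $J$ and combining these identities, one checks for $c := [a,r] \in J$ that $c^2 = 0$, and then that $cBc = \{0\}$; semiprimeness gives $c = 0$, so $[b,a] = 0$ for all $b \in B$ and $a \in J$, that is, $J \subseteq Z(B)$. Second, I would exploit generation by commutators: for $x \in J$ and $b \in B$ the product $xb$ lies in $J \subseteq Z(B)$ and is therefore central, and a short manipulation using centrality of both $x$ and $xb$ gives $x[b,c] = 0$ for all $b,c \in B$. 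Thus $x$ annihilates every commutator, hence every product and every sum of products of commutators; since $B$ is generated by its commutators, $xB = \{0\}$, whence $xBx = \{0\}$ and $x = 0$ by semiprimeness. This forces $J = \{0\}$.

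The main obstacle is the first step of the lemma, namely the classical but delicate fact that a commutative two-sided ideal of a semiprime ring is central: the relevant commutator identities must be assembled in the correct order to produce a square-zero element and only then apply the semiprimeness criterion. By contrast, once centrality is in hand the second step is soft, and the passage from the lemma to \autoref{prp:GenByCommutators} through \autoref{prp:Correspondence} is purely formal.
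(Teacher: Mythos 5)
Your proposal is correct, and it reaches the corollary by a genuinely different decomposition than the paper's own proof. The paper deduces \autoref{prp:GenByCommutators} from \autoref{prp:RangeOfIdeals} (surjectivity) together with \autoref{prp:UniqueRealizingIdeal} (injectivity); the latter is a stronger statement---the ideal generated by $[A,L]$ is the \emph{unique} two-sided ideal, semiprime or not, whose full normalizer is $L$---and its proof passes through \autoref{prp:SemiprimeCommIdl} and hence through the Dixmier-ideal machinery of Section~3. You instead deduce the corollary formally from \autoref{prp:Correspondence}, by showing that its side condition (``$A/I$ contains no nonzero commutative two-sided ideal'') holds automatically for every semiprime $I$ when $A$ is generated by its commutators as a ring; since prime two-sided ideals are semiprime, the restriction to the prime level collapses as well. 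The reduction you isolate is exactly the paper's \autoref{prp:SemiprimeQuotGenByCommutators}, and your route is the one sketched in the paper's introduction, though not the one followed in the body. Your proof of that lemma also differs from the paper's: the paper shows directly that a commutative two-sided ideal $J \subseteq A/I$ satisfies $J^3 = \{0\}$ via the identity $0 = [xa,yb] = xy[a,b]$, whereas you argue in two steps---first the classical fact that a commutative two-sided ideal of a semiprime ring is central (your intermediate claims check out: for $c = [a,r]$ with $a \in J$, commutativity of $J$ gives $ac = ca = 0$, hence $c^2 = (ca)r - (cr)a = 0$ and $cbc = c(cb) = c^2b = 0$, so semiprimeness yields $c = 0$), and then, from centrality of $x$ and of $xb$, that $x[b,c] = 0$ for all $b,c \in B$, so that generation by commutators forces $xB = \{0\}$ and thus $x = 0$. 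What each approach buys: yours is more economical, needing only \autoref{prp:Correspondence} plus elementary ring theory and bypassing Section~3 entirely, and it handles the prime case more explicitly than the paper's two-line proof; the paper's route yields the stronger structural conclusion $\Imin = \Imax$ for semiprime Lie ideals (uniqueness of the realizing ideal among \emph{all} two-sided ideals), and its nilpotency argument for the lemma is shorter. Both are complete proofs of the stated corollary.
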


%==========================================================================================
We say that a Lie ideal $L$ in a \ca{} $A$ is \emph{fully noncentral} if there is no proper two-sided ideal $I$ such that the image of $L$ in $A/I$ is central (equivalently, $A$ is generated by $[A,L]$ is a two-sided ideal).
This concept was introduced by Chand and Robert in \cite{ChaRob23AutoContGrUnit} in the context of \ca{s} with respect to closed two-sided ideals, and generalized by the first and last named author together with Lee in \cite[Definition~A]{GarLeeThi24arX:FullyNoncentral} to the algebraic setting used here.
The next result is \autoref{prp:CharFullyNoncentral}.

%==========================================================================================
\begin{thmIntro}
Let $A$ be a \ca{} that is generated by its commutators as a ring.
Then a Lie ideal in $A$ is fully noncentral if and only if it is not contained in any prime Lie ideal.
\end{thmIntro}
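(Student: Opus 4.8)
The plan is to translate both sides of the asserted equivalence into statements about two-sided ideals, using the equivalent description of fully noncentrality together with the bijection of \autoref{prp:GenByCommutators}. Write $J$ for the two-sided ideal generated by $[A,L]$. By the equivalent formulation of the definition, $L$ fails to be fully noncentral precisely when $J \neq A$. On the other side, since $A$ is generated by its commutators, every prime Lie ideal has the form $T(I)$ for a prime two-sided ideal $I$ by \autoref{prp:GenByCommutators}; and for any subspace one has $L \subseteq T(I)$ if and only if $[A,L] \subseteq I$, that is, $J \subseteq I$. Hence $L$ is contained in some prime Lie ideal if and only if $J$ is contained in some prime two-sided ideal. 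The theorem therefore reduces to the purely ring-theoretic statement that a two-sided ideal $J$ in $A$ is proper if and only if it is contained in a prime two-sided ideal.

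One implication is immediate and I would record it first, as it uses nothing beyond the correspondence: a prime two-sided ideal is by definition proper, so if $J \subseteq I$ for some prime $I$ then $J \subseteq I \subsetneq A$ and $J$ is proper. Unwinding the translation, this is exactly the direction that a fully noncentral Lie ideal is contained in no prime Lie ideal: if $L \subseteq T(I)$ with $I$ prime, then $J \subseteq I \neq A$, so $L$ is not fully noncentral.

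For the converse I must show that every proper two-sided ideal $J$ is contained in some prime two-sided ideal, and I would split according to the closure $\overline{J}$. If $\overline{J} \neq A$, then $A/\overline{J}$ is a nonzero \ca{} and so admits a primitive ideal; its preimage is a closed primitive ideal $P$ with $J \subseteq \overline{J} \subseteq P$. Such $P$ is prime among closed ideals, and passing to closures (using $\overline{K_1}\,\overline{K_2} \subseteq \overline{K_1 K_2} \subseteq P$ for any two-sided ideals $K_1,K_2$ with $K_1K_2 \subseteq P$) shows that $P$ is prime among all, not necessarily closed, two-sided ideals. The substantive case is $\overline{J} = A$, where $J$ is proper but dense and no closed ideal is available. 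Here the plan is to produce a positive element $h \in A$ with $h^n \notin J$ for all $n \geq 1$: then $\{ h^n : n \geq 1 \}$ is an $m$-system disjoint from $J$, and a standard Zorn's lemma argument yields a two-sided ideal containing $J$ and maximal with respect to avoiding this system, which is automatically prime.

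The crux is thus the existence of such an $h$ for an arbitrary proper $J$. I would obtain it by passing to a commutative subalgebra: choosing a positive $a \notin J$ (possible since the positive elements span $A$ and $J \neq A$), the intersection $J \cap C^*(a)$ is a proper algebraic ideal of $C^*(a) \cong C_0(\sigma(a)\setminus\{0\})$, and it suffices to find a positive $h$ in this commutative \ca{} no power of which lies in $J \cap C^*(a)$. If the common zero set of $J \cap C^*(a)$ is nonempty, this is immediate by taking $h$ nonvanishing at such a point. The genuinely delicate subcase is once more the dense one, where the point is that there is no slowest rate of decay: no proper ideal can contain a power of every positive function, since one can always manufacture a positive function decaying strictly more slowly than the ideal permits, all of whose powers still decay too slowly to enter it. I expect this existence statement --- equivalently, the foundational fact that every proper two-sided ideal in a \ca{} is contained in a prime one --- to be the main obstacle, and I would isolate it as a separate lemma, proving it along the lines above or deducing it from the prime and semiprime ideal theory developed in \cite{GarKitThi23arX:SemiprimeIdls}.
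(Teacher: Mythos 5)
Your reduction is correct, and it is essentially the paper's own route: translating fully noncentrality into ``the two-sided ideal $J$ generated by $[A,L]$ equals $A$'', and using \autoref{prp:GenByCommutators} together with the equivalence $L \subseteq T(I) \Leftrightarrow J \subseteq I$, the theorem becomes the purely ideal-theoretic statement that a two-sided ideal of $A$ is proper if and only if it is contained in a prime two-sided ideal. (The paper phrases the forward direction via \autoref{prp:ArisingFromIdeal} and the converse via \autoref{prp:FullNormalizer} and \autoref{prp:SemiprimeQuotGenByCommutators}, but the content is the same.) Your easy implication is fine, and so is your treatment of the case $\overline{J} \neq A$: a primitive ideal is indeed prime among arbitrary, not necessarily closed, two-sided ideals, by the closure argument you give.

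The genuine gap is the dense case, which is where the entire difficulty of the theorem sits. The statement you need there --- every proper, not necessarily closed, two-sided ideal in a \ca{} is contained in a prime ideal --- is precisely \cite[Theorem~A]{GarThi24PrimeIdealsCAlg}, which the paper invokes as a black box at exactly this point; it is not contained in \cite{GarKitThi23arX:SemiprimeIdls}, the reference you suggest falling back on. Your ``no slowest rate of decay'' paragraph is a heuristic, not a proof: properness of $J \cap C^*(a)$ only hands you a single positive function $f \notin J$, and you still must manufacture from it a positive $h$ none of whose powers lies in $J$; moreover nothing soft (Zorn toward maximal ideals, closed-ideal theory) applies to dense ideals of nonunital algebras, so this step cannot be waved through. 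The good news is that your idea can be made rigorous in a few lines: set $h := \big(\log(e+1/f)\big)^{-1}$, interpreted by functional calculus in $C^*(f)$ (so $h$ vanishes where $f$ does). Since $t\big(\log(e+1/t)\big)^n \to 0$ as $t \to 0^+$, the function $c_n(t) := t\big(\log(e+1/t)\big)^n$ lies in $C_0(\sigma(f)\setminus\{0\})$ for every $n$, and $f = h^n\, c_n(f)$; thus $f$ belongs to the two-sided ideal generated by $h^n$, so $h^n \in J$ would force $f \in J$, a contradiction. Hence no power of $h$ lies in $J$, and your m-system/Zorn argument finishes (note this handles every proper $J$ uniformly, making both your case distinction on $\overline{J}$ and the hull discussion unnecessary). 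So the proposal is completable along your own lines, but as written it leaves unproved precisely the key imported ingredient of the paper's proof.
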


%==========================================================================================
%==========================================================================================
\section{Characterization of prime and semiprime Lie ideals}
\label{sec:SemiprimeLie}

%==========================================================================================
In this section, we prove a characterization of (semi)prime Lie ideals in a \ca{} in terms of certain (semi)prime two-sided ideals;
see \autoref{prp:Correspondence}.

\medskip

%==========================================================================================
For a Lie ideal $L$ in a \ca\ $A$, we write $T(L)=\{a\in A\colon [A,a]\subseteq L\}$. 
The following easy observation can be extracted from \cite[Lemma~3]{Her55LieJordanSimpleRing}, and we include it for the convenience of the reader.

%==========================================================================================
\begin{lma}
\label{lma:TLieIdeal}
Let $L$ be a Lie ideal in a \ca\ $A$. 
Then $[A,T(L)]\subseteq L$, and thus $T(L)$ is a Lie ideal in $A$. 
Moreover, $T(L)$ is a subalgebra of $A$.
\end{lma}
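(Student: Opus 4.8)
The plan is to establish the three assertions in turn, after first observing that $T(L)$ is a linear subspace: since $L$ is closed under linear combinations and the commutator is bilinear, for $a,b \in T(L)$ and scalars $\lambda,\mu$ we have $[A, \lambda a + \mu b] \subseteq L$, so $\lambda a + \mu b \in T(L)$. The inclusion $[A, T(L)] \subseteq L$ is then immediate: every element of $T(L)$ has commutator with $A$ inside $L$ by definition, and $L$ being a subspace absorbs the spanning commutators.

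For the Lie ideal property I would fix $a \in T(L)$ and $x \in A$ and verify that $[x,a] \in T(L)$, that is, $[y,[x,a]] \in L$ for all $y \in A$. The Jacobi identity gives $[y,[x,a]] = [[y,x],a] + [x,[y,a]]$. The first summand lies in $[A,a] \subseteq L$ because $a \in T(L)$, and the second lies in $[A,L] \subseteq L$ because $[y,a] \in L$ and $L$ is a Lie ideal. Hence $[x,a] \in T(L)$, so $[A,T(L)] \subseteq T(L)$ and $T(L)$ is a Lie ideal.

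The subalgebra claim is where the real work lies, and the crux is the commutator identity $[y,ab] = [ya,b] + [by,a]$, valid for all $a,b,y \in A$ (both sides equal $yab - aby$). Given $a,b \in T(L)$, the first summand satisfies $[ya,b] \in [A,b] \subseteq L$ since $b \in T(L)$, and the second satisfies $[by,a] \in [A,a] \subseteq L$ since $a \in T(L)$; thus $[y,ab] \in L$ for every $y$, i.e.\ $ab \in T(L)$. Combined with linearity this shows that $T(L)$ is a subalgebra. I expect the main obstacle to be locating the right identity: the naive Leibniz expansion $[y,ab] = [y,a]b + a[y,b]$ only yields $[y,ab] \equiv [y,ba] \pmod{L}$ and never closes up, since $L$ is merely a Lie ideal and cannot absorb the products $[y,a]b$ or $a[y,b]$. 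The point is instead to push the auxiliary factors into the \emph{first} slot of each commutator, exploiting that every element of $T(L)$ has commutator in $L$ with \emph{all} of $A$, including $ya$ and $by$.
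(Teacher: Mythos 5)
Your proposal is correct and takes essentially the same route as the paper: the subalgebra step rests on exactly the identity the paper uses, $[y,ab]=[ya,b]+[by,a]$, applied in the same way. The Lie-ideal step (which the paper dismisses as obvious) is handled validly by your Jacobi argument, though it can be shortened: $[x,a]\in[A,a]\subseteq L$ already, and $L$ being a Lie ideal gives $[y,[x,a]]\in[A,L]\subseteq L$ directly, so $[A,T(L)]\subseteq L\subseteq T(L)$.
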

\begin{proof}
The first assertion is obvious. 
For the second, observe that $T(L)$ is clearly closed under scalar multiplication and addition, so it suffices to show that it is closed under products. 
Let $s,t\in T(L)$ and let $a\in A$. 
We will show that $[a,st]$ belongs to $L$. 
Note that 
\[
[a,st]=[as,t]+[ta,s]
\]
is the sum of two elements in $L$, and thus belongs to $L$.
Thus, $st \in T(L)$.
\end{proof}

%==========================================================================================
We will use $Z(R)$ to denote the center of a ring $R$.

%==========================================================================================
\begin{lma}
\label{prp:T-Ideal}
Let $I$ be a two-sided ideal in a \ca{} $A$, and let $\pi \colon A \to A/I$ denote the quotient map.
Then
\[
T(I) = \big\{ a \in A : \pi(a) \in Z(A/I) \big\}.
\]
\end{lma}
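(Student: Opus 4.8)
The plan is to prove the asserted equality of sets by unwinding both definitions into the single condition that $[b,a] \in I$ for every $b \in A$, and verifying that each side is equivalent to it. The key structural fact I would use is that $\pi$ is a surjective algebra homomorphism with $\ker \pi = I$, and in particular that $\pi$ respects commutators, so that $\pi([b,a]) = [\pi(b),\pi(a)]$ for all $a,b \in A$.

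First I would fix $a \in A$ and record one chain of equivalences. By definition, $a \in T(I)$ precisely when $[A,a] \subseteq I$, that is, when $[b,a] \in I$ for every $b \in A$. Since $I = \ker \pi$, this holds if and only if $\pi([b,a]) = 0$ for every $b \in A$, and by the commutator identity above this is equivalent to $[\pi(b),\pi(a)] = 0$ for every $b \in A$.

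Second, I would invoke surjectivity of $\pi$ to pass from a quantifier over $A$ to a quantifier over $A/I$: as $b$ ranges over $A$, the image $\pi(b)$ ranges over all of $A/I$. Hence $[\pi(b),\pi(a)] = 0$ for all $b \in A$ is the same as $[y,\pi(a)] = 0$ for all $y \in A/I$, which is exactly the statement $\pi(a) \in Z(A/I)$. Chaining the two equivalences gives $a \in T(I)$ if and only if $\pi(a) \in Z(A/I)$, which is the claim.

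I expect no genuine obstacle here, as the statement is essentially a direct translation of definitions through the quotient map. The only point demanding a little care is the use of surjectivity of $\pi$ in the second step, which is what guarantees that centrality of $\pi(a)$ in $A/I$ (a condition quantified over the quotient) matches the condition $[b,a] \in I$ for all $b \in A$ (a condition quantified over $A$).
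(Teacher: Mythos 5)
Your proof is correct and is essentially the paper's own argument: the paper condenses your two steps into the single identity $\pi([A,a]) = [A/I,\pi(a)]$, which packages both the homomorphism property and the surjectivity of $\pi$ that you spell out explicitly. No issues.
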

\begin{proof}
Given $a \in A$, by definition we have $a \in T(I)$ if and only if $\pi([A,a]) = \{0\}$.
Using that $\pi([A,a]) = [A/I,\pi(a)]$, we see that $a \in T(I)$ if and only if $\pi(a) \in Z(A/I)$.
\end{proof}

%==========================================================================================
A ring $R$ is said to have \emph{no 2-torsion} if multiplication by 2 is an injective map $R\to R$. 
No complex algebra has 2-torsion, and we will in particular use this for the quotient~$A/I$ of a \ca\ $A$ by an arbitrary two-sided ideal $I$. 

%==========================================================================================
\begin{prp}
\label{prp:FullNormalizer}
Let $I$ be a two-sided ideal in a \ca{} $A$.
The following statements hold:
\begin{enumerate}
\item 
If $I$ is semiprime, then~$T(I)$ is a semiprime Lie ideal in $A$.
\item 
If $I$ is prime and $A/I$ is nonabelian, then $T(I)$ is a prime Lie ideal in $A$.
\end{enumerate}
\end{prp}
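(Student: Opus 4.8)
The plan is to reduce both assertions to ring-theoretic statements about the quotient $B := A/I$, using the description of $T(I)$ from \autoref{prp:T-Ideal}, and then to settle those via the commutator/ideal-power calculus. Write $\pi \colon A \to B$ for the quotient map, so that $T(I) = \pi^{-1}(Z(B))$ by \autoref{prp:T-Ideal}. If $K, K_1, K_2$ are Lie ideals of $A$, then their images under the surjection $\pi$ are Lie ideals of $B$, and
\[
K \subseteq T(I) \ \iff\ \pi(K)\subseteq Z(B), \qquad [K_1,K_2]\subseteq T(I)\ \iff\ [\pi(K_1),\pi(K_2)]\subseteq Z(B).
\]
Since $I$ is semiprime (resp.\ prime), the ring $B$ is semiprime (resp.\ prime), and being a complex algebra it has no $2$-torsion. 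Hence (1) reduces to: \emph{a Lie ideal $M$ of a $2$-torsion-free semiprime ring $B$ with $[M,M]\subseteq Z(B)$ satisfies $M\subseteq Z(B)$}; and (2) reduces to: \emph{Lie ideals $M_1,M_2$ of a prime ring $B$ with $[M_1,M_2]\subseteq Z(B)$ satisfy $M_1\subseteq Z(B)$ or $M_2\subseteq Z(B)$}. The properness $T(I)\neq A$ required in (2) is automatic, since $A/I$ nonabelian means $Z(B)\neq B$.

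For the reduced statement in (1) I would argue through the inner derivation $d_m := [\,\cdot\,,m]$ attached to $m\in M$. As $M$ is a Lie ideal, $d_m(B)\subseteq M$, whence $d_m^2(B)\subseteq[M,M]\subseteq Z(B)$; and since $d_m$ kills central elements, this forces $d_m^3=0$. It then remains to deduce $m\in Z(B)$ from $d_m^2(B)\subseteq Z(B)$ in a $2$-torsion-free semiprime ring, which is the classical principle that a non-central Lie ideal cannot have central self-commutators (in the spirit of Herstein's and Lanski's work on the Lie structure of semiprime rings). Semiprimeness enters precisely to annihilate the central nilpotent elements thrown up by the Leibniz expansion of $d_m^2$ on products. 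The reduced statement in (2) runs through the same circle of ideas, now using the structure theory of Lie ideals in prime rings: a non-central Lie ideal controls a nonzero two-sided ideal, and primeness of $B$ then forces the partner $M_2$ to be central.

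Alternatively, and more in line with the tools announced, one can avoid passing to $B$ and work with the two-sided ideal $I_K := \langle [A,K]\rangle$ generated by $[A,K]$; then $K\subseteq T(I)\iff I_K\subseteq I$, and, since $[K,K]$ is again a Lie ideal (by the Jacobi identity together with $[A,K]\subseteq K$), the hypothesis $[K,K]\subseteq T(I)$ reads $I_{[K,K]}\subseteq I$. The crux is a power inclusion, supplied by the theory of powers of Dixmier ideals in \cite{GarKitThi23arX:SemiprimeIdls}: a suitable power of $I_K$ lies in $I_{[K,K]}$. Granting this, $[K,K]\subseteq T(I)$ gives $I_K^{\,n}\subseteq I$, and semiprimeness of $I$ upgrades this to $I_K\subseteq I$, that is, $K\subseteq T(I)$; the prime case is handled the same way through a product inclusion relating $I_{K_1}I_{K_2}$ to $I_{[K_1,K_2]}$, after which primeness of $I$ splits the product.

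The step I expect to be the main obstacle is exactly this ideal-power (equivalently, derivation) core. Products of commutators are not formally generated by commutators of commutators, so there is no cheap passage from the containment $I_{[K,K]}\subseteq I$ back to control of $I_K$; supplying the correct power, uniformly in the Lie ideal $K$, and keeping track of the corrections coming from the non-closed ideals involved, is the delicate point where the Dixmier-ideal calculus of \cite{GarKitThi23arX:SemiprimeIdls} does the real work.
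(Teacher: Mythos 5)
Your proposal is correct and takes essentially the same route as the paper: you reduce via \autoref{prp:T-Ideal} to the quotient $A/I$, note it is semiprime (resp.\ prime) and $2$-torsion-free, and then invoke the classical results on Lie ideals with central commutators in such rings, which are exactly the lemmas the paper cites (\cite[Lemma~1]{Her70LieStructure} for the semiprime case and \cite[Lemma~7]{LanMon72LieStrPrimeChar2} for the prime case), with the same observation that $T(I)\neq A$ follows from $A/I$ being nonabelian. The derivation sketch and the alternative Dixmier-ideal route are not needed; your first reduction together with those classical citations already constitutes the paper's proof.
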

\begin{proof}
(1)
Suppose that $I$ is semiprime, and let $\pi \colon A \to A/I$ denote the quotient map.
By \autoref{prp:T-Ideal}, we have 
$T(I) = \big\{ a \in A\colon \pi(a) \in Z(A/I) \big\}$, 
and thus $\pi(T(I))=Z(A/I)$.
To show that $T(I)$ is a semiprime Lie ideal, let $K \subseteq A$ be a Lie ideal such that $[K,K] \subseteq T(I)$.
Then
\[
\big[ \pi(K),\pi(K) \big]
= \pi\big( [K,K] \big)
\subseteq \pi(T(I))
= Z(A/I).
\]
Using that $A/I$ is a semiprime ring without $2$-torsion, we deduce from \cite[Lemma~1]{Her70LieStructure} that $\pi(K) \subseteq Z(A/I)$, and thus $K \subseteq T(I)$, as desired.

\medskip

(2) 
Assume that $I$ is prime and that $A/I$ is nonabelian.
Then $A/I \neq Z(A/I)$ and it follows from \autoref{prp:T-Ideal} that $T(I) \neq A$.
To show that $T(I)$ is a prime Lie ideal, let $K_1,K_2 \subseteq A$ be Lie ideals such that $[K_1,K_2] \subseteq T(I)$.
Then
\[
\big[ \pi(K_1),\pi(K_2) \big]
= \pi\big( [K_1,K_2] \big)
\subseteq \pi(T(I))
= Z(A/I).
\]
Using that $A/I$ is a prime ring without $2$-torsion, we deduce from \cite[Lemma~7]{LanMon72LieStrPrimeChar2} that either $\pi(K_1) \subseteq Z(A/I)$ or $\pi(K_2) \subseteq Z(A/I)$, and thus $K_1 \subseteq T(I)$ or $K_2 \subseteq T(I)$.
\end{proof}

%==========================================================================================
We briefly recall some basic results about the theory of powers of Dixmier ideals developed in \cite{GarKitThi23arX:SemiprimeIdls}.

%==========================================================================================
\begin{pgr}
\label{pgr:DixmierIdls}
Let $A$ be a \ca.
Following \cite[Definition~3.2]{GarKitThi23arX:SemiprimeIdls}, we say that a two-sided ideal $I$ in $A$ is a \emph{Dixmier ideal} if it is positively spanned, hereditary, and its positive part is strongly invariant.
If $I \subseteq A$ is a Dixmier ideal, then for every $s \in (0,\infty)$ there is a unique Dixmier ideal $I^s \subseteq A$ whose positive part $(I^s)_+$ is $\{a^s : a \in I_+\}$;
see Proposition~3.7 and Definition~3.8 in \cite{GarKitThi23arX:SemiprimeIdls}.
By \cite[Theorem~3.9]{GarKitThi23arX:SemiprimeIdls}, we have $I^s I^t = I^{s+t}$ for all $s,t\in(0,\infty)$, and we have $I^s \subseteq I^t$ whenever $s \geq t >0$.

By \cite[Proposition~4.1]{GarKitThi23arX:SemiprimeIdls}, the Dixmier ideals in $A$ form a complete sublattice of the lattice of two-sided ideals, and in particular every two-sided ideal~$I$ contains a largest Dixmier ideal, called the \emph{Dixmier interior} of $I$ and denoted $ \interior(I)$, and is contained in a smallest Dixmier ideal, called the \emph{Dixmier closure} of $I$ and denoted $\exterior(I)$.
By \cite[Theorem~4.4]{GarKitThi23arX:SemiprimeIdls}, we have
\[
\exterior(I)^{1+\varepsilon} 
\subseteq \interior(I)
\subseteq I
\subseteq \exterior(I)
\subseteq \interior(I)^{1-\varepsilon} 
\]
for every $\varepsilon>0$.
\end{pgr}

%==========================================================================================
Given a two-sided ideal $I$ in a ring $R$, we use $\sqrt{I}$ to denote its semiprime closure, that is, the intersection of all prime ideals in $R$ containing $I$;
see \cite[Section~10]{Lam01FirstCourse2ed} for details.

%==========================================================================================
\begin{lma}
\label{prp:IdealInLieIdeal}
Let $L$ be a semiprime Lie ideal in a \ca{} $A$, and let $I \subseteq A$ be a two-sided ideal such that $I \subseteq L$.
Then $\sqrt{I} \subseteq L$.
\end{lma}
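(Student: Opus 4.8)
The plan is to extract from the semiprimeness of $L$ a purely multiplicative absorption property for two-sided ideals, and then feed it into the standard construction of the semiprime closure.

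First I would record the key consequence of semiprimeness. If $J \subseteq A$ is any two-sided ideal, then $J$ is in particular a Lie ideal and $[J,J] \subseteq J^2$ (each commutator $ab-ba$ lies in $J^2$). Hence if $J^2 \subseteq L$, then $[J,J] \subseteq L$, and semiprimeness of $L$ forces $J \subseteq L$. Bootstrapping through the chain $J^{2^k} = (J^{2^{k-1}})^2$ and using $J^{2^k} \subseteq J^n$ for $2^k \ge n$, this upgrades to the statement that for every two-sided ideal $J$ and every $n \ge 1$,
\[
J^n \subseteq L \quad\Longrightarrow\quad J \subseteq L .
\]
So $L$ absorbs every nilpotent two-sided ideal modulo any two-sided ideal it already contains.

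Next I would pass from $I$ to $\sqrt I$ along the construction of the semiprime closure. Recall that $\sqrt I$, the intersection of all prime ideals containing $I$, coincides with the lower nilradical of $A$ over $I$ and is reached by the transfinite recipe of \cite{Lam01FirstCourse2ed}: set $N_0 = I$, let $N_{\alpha+1}$ be the preimage in $A$ of the sum of all nilpotent two-sided ideals of $A/N_\alpha$ (equivalently, $N_{\alpha+1}$ is the sum of $N_\alpha$ with all two-sided ideals $J$ satisfying $J^n \subseteq N_\alpha$ for some $n$), take unions at limit stages, and stop when the chain stabilizes at $\sqrt I$. I would then prove $N_\alpha \subseteq L$ for all $\alpha$ by transfinite induction: the base case is the hypothesis $I \subseteq L$; at a successor stage, each ideal $J$ with $J^n \subseteq N_\alpha \subseteq L$ lies in $L$ by the absorption property above, and since $L$ is a linear subspace it contains their sum together with $N_\alpha$, so $N_{\alpha+1} \subseteq L$; limit stages are unions and are handled trivially. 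Passing to the stabilizing stage yields $\sqrt I \subseteq L$.

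The only delicate point in this route is the interface with the ring-theoretic radical: one must make sure the semiprime closure is genuinely exhausted by this accumulation of nilpotent ideals (a standard feature of the prime radical) and that all stages remain linear subspaces, which is automatic here since $I$ and the relevant ideals $J$ are linear subspaces in a complex algebra. Alternatively, and more in the spirit of the surrounding development, one can avoid the transfinite construction by working with Dixmier powers: using the inclusion chain of \autoref{pgr:DixmierIdls} one checks that $\sqrt I = \sqrt{\interior(I)}$ and that all Dixmier powers $\interior(I)^s$ share this semiprime closure, whereupon the absorption property gives $\interior(I)^{1/2^n} \subseteq L$ for all $n$ via $(\interior(I)^{1/2^n})^2 = \interior(I)^{1/2^{n-1}}$. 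In this route the main obstacle is precisely the identification $\sqrt{\interior(I)} = \bigcup_n \interior(I)^{1/2^n}$, that is, the fact that the ascending union of Dixmier roots is already semiprime; this is where the finer analysis of semiprime ideals from \cite{GarKitThi23arX:SemiprimeIdls} would be needed.
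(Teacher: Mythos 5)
Your proposal is correct, and your primary route is genuinely different from the paper's. Both proofs begin with the same exploitation of Lie-semiprimeness: for a two-sided ideal $K$ one has $[K,K] \subseteq K^2$, so $K^2 \subseteq L$ forces $K \subseteq L$, and bootstrapping through powers of $2$ gives your absorption property $J^n \subseteq L \Rightarrow J \subseteq L$. They diverge in how they climb from $I$ to $\sqrt{I}$. The paper sets $J = \interior(I)$ and applies absorption along the Dixmier roots, using $J^{\frac{1}{2^n}} J^{\frac{1}{2^n}} = J^{\frac{1}{2^{n-1}}}$ to get $J^{\frac{1}{2^n}} \subseteq L$ for every $n$, and then concludes with \cite[Theorem~5.9]{GarKitThi23arX:SemiprimeIdls}, which identifies $\sqrt{I}$ with $\bigcup_n J^{\frac{1}{n}}$; this is precisely the ``alternative'' route you sketch at the end, and you correctly isolate that theorem as its one nontrivial input. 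Your main route replaces this analytic input by the classical Baer (lower nilradical) transfinite construction, checking by transfinite induction that every stage stays inside $L$ --- absorption at successor stages, linearity of $L$ for sums, and nothing at limits. This buys elementarity and generality: no step uses the $C^*$-structure (nor $2$-torsion-freeness), so your argument proves the statement for semiprime Lie ideals in arbitrary rings, whereas the paper's proof is intrinsically $C^*$-algebraic. The costs are mild. First, your attribution of the transfinite recipe to \cite{Lam01FirstCourse2ed} is loose: Lam works with unital rings and reaches the prime radical via m-systems and strongly nilpotent elements rather than the Baer iteration; the fact you need (that the iteration terminates at the intersection of the prime ideals containing $I$, also for nonunital rings) is classical radical theory but should be cited accordingly. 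Second, one should run the construction with linear-subspace ideals (or note that the span of a nilpotent ring ideal is again a nilpotent ideal), since the paper's semiprimeness condition for Lie ideals applies to linear subspaces; your parenthetical remark covers this adequately. Finally, it is worth seeing how the two routes relate: \cite[Theorem~5.9]{GarKitThi23arX:SemiprimeIdls} says in effect that for ideals in $C^*$-algebras the Baer construction over $I$ stabilizes after a single step, since every element of $\sqrt{I}$ already lies in an ideal (some $J^{\frac{1}{n}}$) with a power inside $I$; that is exactly what lets the paper trade your transfinite induction for an ordinary one.
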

\begin{proof}
We will use the theory of powers of Dixmier ideals;
see \autoref{pgr:DixmierIdls}.
Set $J := \interior(I)$, the Dixmier interior of $I$.
%Using \cite[Theorem~3.9]{GarKitThi23arX:SemiprimeIdls} at the second step, we have
Using \cite[Theorem~3.9]{GarKitThi23arX:SemiprimeIdls} at the second step, we get
\[
\big[ J^{\frac{1}{2}},J^{\frac{1}{2}} \big]
\subseteq J^{\frac{1}{2}} J^{\frac{1}{2}}
= J 
\subseteq I
\subseteq L.
\]
Using that $L$ is a semiprime Lie ideal, we get $J^{\frac{1}{2}} \subseteq L$.
Repeating this argument for
\[
\big[ J^\frac{1}{4},J^\frac{1}{4} \big]
\subseteq J^\frac{1}{4} J^\frac{1}{4}
= J^{\frac{1}{2}} 
\subseteq L,
\]
we get $J^\frac{1}{4} \subseteq L$.
Successively, we deduce that $J^{\frac{1}{2^n}} \subseteq L$ for all $n \geq 1$.
%By \cite[Theorem~3.9]{GarKitThi23arX:SemiprimeIdls}, w
Applying \cite[Theorem~5.9]{GarKitThi23arX:SemiprimeIdls} at the first step, and using that $J^{\frac{1}{n}} \subseteq J^{\frac{1}{2^n}}$ for each $n$, we conclude that
\[
\sqrt{I} 
= \bigcup_{n=1}^{\infty} J^{\frac{1}{n}} 
\subseteq \bigcup_{n=1}^{\infty} J^{\frac{1}{2^n}} 
\subseteq L.\qedhere
\]
\end{proof}

%==========================================================================================
Part~(1) of~\autoref{prp:FullNormalizer} shows that if $I$ is a semiprime two-sided ideal in a \ca, then its full normalizer is a semiprime Lie ideal.
The next result implies that all semiprime Lie ideals arise this way.
This will be made more precise in \autoref{prp:RangeOfIdeals} and \autoref{prp:Correspondence} below.

%==========================================================================================
\begin{lma}
\label{prp:ArisingFromIdeal}
Let $L$ be a semiprime Lie ideal in a \ca{} $A$, and let $I_0$ and $I_{\mathrm{min}}$ denote the two-sided ideals in $A$ generated by $[L,L]$ and $[A,L]$, respectively.
Then
\[
I_0 \subseteq I_{\mathrm{min}}\subseteq \sqrt{I_{\mathrm{min}}} = \sqrt{I_0}, \andSep 
L = T(\sqrt{I_{\mathrm{min}}}). % = \big\{ a \in A : \pi(a) \in Z(A/\sqrt{I_{\mathrm{min}}}) \big\}.
\]
\end{lma}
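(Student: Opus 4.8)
The plan is to establish the displayed containments and equalities one at a time, reducing the entire statement to the single inclusion $\sqrt{\Imin}\subseteq L$; everything else is formal. The containment $I_0\subseteq\Imin$ is immediate since $[L,L]\subseteq[A,L]$, and $\Imin\subseteq\sqrt{\Imin}$ holds for any ideal. For $L\subseteq T(\sqrt{\Imin})$, observe that for $l\in L$ we have $[A,l]\subseteq[A,L]\subseteq\Imin\subseteq\sqrt{\Imin}$, which is exactly the defining condition for membership in $T(\sqrt{\Imin})$. Thus it remains to prove (a) $\sqrt{\Imin}=\sqrt{I_0}$, (b) $\sqrt{\Imin}\subseteq L$, and (c) the reverse inclusion $T(\sqrt{\Imin})\subseteq L$; and I will see that (c) follows from (b) together with semiprimeness.

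For (a), since $I_0\subseteq\Imin$ already gives $\sqrt{I_0}\subseteq\sqrt{\Imin}$, it suffices to show $\Imin\subseteq\sqrt{I_0}$, and as $\sqrt{I_0}$ is a two-sided ideal this amounts to $[A,L]\subseteq\sqrt{I_0}$. I would pass to the quotient $R:=A/\sqrt{I_0}$, which is a semiprime ring without $2$-torsion, and write $\pi$ for the quotient map. Since $[L,L]\subseteq I_0\subseteq\sqrt{I_0}$, the image $\pi(L)$ is a Lie ideal of $R$ with $[\pi(L),\pi(L)]=\{0\}\subseteq Z(R)$, so \cite[Lemma~1]{Her70LieStructure} forces $\pi(L)\subseteq Z(R)$. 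Hence $[A,L]\subseteq\ker\pi=\sqrt{I_0}$, and then $\sqrt{\Imin}\subseteq\sqrt{I_0}$, giving (a).

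The heart of the argument is (b). Here I would invoke the classical commutator identities for Lie ideals: in any $2$-torsion-free ring one has $a[a,l]\in L$ for all $a\in A$ and $l\in L$ (obtained by combining $[l,a^2]$ with $[[l,a],a]$), and, after polarizing and iterating these relations, the two-sided ideal $I_0$ generated by $[L,L]$ satisfies $[I_0,A]\subseteq L$. Granting this, $I_0$ is a two-sided ideal, hence a Lie ideal, and $[I_0,I_0]\subseteq[I_0,A]\subseteq L$; since $L$ is semiprime, this yields $I_0\subseteq L$. Now \autoref{prp:IdealInLieIdeal} applies and gives $\sqrt{I_0}\subseteq L$, that is, $\sqrt{\Imin}\subseteq L$ by (a). I expect verifying the purely ring-theoretic identity $[I_0,A]\subseteq L$ to be the main obstacle, and it is precisely here that the semiprimeness of $L$ becomes indispensable: without it the conclusion fails, as the (non-semiprime) Lie ideal $L=[A,A]$ in $A=M_2(\mathbb{C})$ shows, where $I_0=A\not\subseteq L$ even though $[I_0,I_0]=[A,A]\subseteq L$.

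Finally, for (c), the subspace $T(\sqrt{\Imin})$ is a Lie ideal by \autoref{lma:TLieIdeal}, and applying that lemma to the Lie ideal $\sqrt{\Imin}$ gives $[A,T(\sqrt{\Imin})]\subseteq\sqrt{\Imin}\subseteq L$, where the last step uses (b). In particular $[T(\sqrt{\Imin}),T(\sqrt{\Imin})]\subseteq L$, so semiprimeness of $L$ forces $T(\sqrt{\Imin})\subseteq L$. Combined with $L\subseteq T(\sqrt{\Imin})$ this yields $L=T(\sqrt{\Imin})$, completing the proof.
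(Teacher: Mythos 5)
Your proposal is correct and takes essentially the same route as the paper: the same key ingredients appear in slightly different order, namely \cite[Lemma~1]{Her70LieStructure} applied in $A/\sqrt{I_0}$, semiprimeness of $L$ applied to $[I_0,I_0]\subseteq L$ to get $I_0\subseteq L$, then \autoref{prp:IdealInLieIdeal} to get $\sqrt{I_0}\subseteq L$, and \autoref{lma:TLieIdeal} with semiprimeness for $T(\sqrt{\Imin})\subseteq L$. The one step you flag as the ``main obstacle''---the purely ring-theoretic inclusion $[A,I_0]\subseteq L$---is exactly what the paper disposes of by citation: $I_0\subseteq L+L^2$ (\cite[Lemma~1.1]{Rob16LieIdeals} or \cite[Lemma~2.1(ii)]{Lee22AddSubgpGenNCPoly}) combined with the identity $[A,L^2]\subseteq[A,L]\subseteq L$, so it is classical and poses no real difficulty.
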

\begin{proof}
The inclusions $I_0 \subseteq I_{\mathrm{min}}\subseteq \sqrt{I_{\mathrm{min}}}$ are clear.
We will show that $L = T(\sqrt{I_0})$ and that $\sqrt{I_0} = \sqrt{I_{\mathrm{min}}}$.

We first verify $T(\sqrt{I_0}) \subseteq L$.
It is well-known that $I_0$ is contained in $L+L^2$;
see, for example, \cite[Lemma~1.1]{Rob16LieIdeals} or \cite[Lemma~2.1(ii)]{Lee22AddSubgpGenNCPoly}.
(See also \cite[Lemma~3.2]{GarThi25GenByCommutators} for a quantitative version.)
Using that $[a,bc] = [ab,c] + [ca,b]$ for all $a,b,c \in A$, we see that $[A,L^2] \subseteq [A,L]$;
see also, for example, \cite[Lemma~2.1(i)]{Lee22AddSubgpGenNCPoly}.
Combining these facts, we get
\[
[I_0,I_0]
\subseteq [A,I_0]
\subseteq [A,L+L^2]
\subseteq [A,L]
\subseteq L.
\]
Since $L$ is a semiprime Lie ideal, it follows that $I_0 \subseteq L$.
Applying \autoref{prp:IdealInLieIdeal}, we get $\sqrt{I_0} \subseteq L$.
It follows that
\[
\big[ T(\sqrt{I_0}),T(\sqrt{I_0}) \big] 
\subseteq \big[ A,T(\sqrt{I_0}) \big]
\subseteq \sqrt{I_0}
\subseteq L.
\]
Using that $T(\sqrt{I_0})$ is a Lie ideal by \autoref{lma:TLieIdeal}, we deduce that $T(\sqrt{I_0}) \subseteq L$.

\medskip

We now proceed to show that $L \subseteq T(\sqrt{I_0})$.
Let $\pi \colon A \to A/\sqrt{I_0}$ denote the quotient map.
Since $[L,L] \subseteq I_0$, we have
\[
\big[ \pi(L), \pi(L) \big]
= \pi\big( [L,L] \big)
= \{0\}.
\]
Thus, $\pi(L)$ is a Lie ideal in $A/\sqrt{I_0}$ satisfying $[\pi(L),\pi(L)] \subseteq Z(A/\sqrt{I_0})$.
Since $A/\sqrt{I_0}$ is a semiprime ring without $2$-torsion, we deduce from \cite[Lemma~1]{Her70LieStructure} that $\pi(L) \subseteq Z(A/\sqrt{I_0})$. Since $\pi^{-1}(Z(A/\sqrt{I_0})) = T(\sqrt{I_0})$ by \autoref{prp:T-Ideal}, we deduce that
$L \subseteq T(\sqrt{I_0})$, as desired.

\medskip

Finally, let us see that $\sqrt{I_0}=\sqrt{I_{\mathrm{min}}}$.
The inclusion `$\subseteq$' holds since $I_0 \subseteq I_{\mathrm{min}}$.
On the other hand, since $L \subseteq  T(\sqrt{I_0})$, we have $[A,L] \subseteq \sqrt{I_0}$.
Since $I_{\mathrm{min}}(L)$ is the 2-sided ideal generated by $[A,L]$ and $\sqrt{I_0}$ is a two-sided ideal, we get $I_{\mathrm{min}}\subseteq \sqrt{I_0}$, and thus $\sqrt{I_{\mathrm{min}}} \subseteq \sqrt{I_0}$.
\end{proof}

%==========================================================================================
\begin{lma}
\label{prp:Imax}
Let $L$ be a linear subspace in a \ca{} $A$.
Then there exists a (unique) largest two-sided ideal $I_{\mathrm{max}}$ in $A$ that is contained in $L$.
\end{lma}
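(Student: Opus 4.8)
The plan is to realize $\Imax$ as the sum of all two-sided ideals contained in $L$ and to verify that this sum is again a two-sided ideal contained in $L$. Let $\mathcal{F}$ denote the collection of all two-sided ideals $I \subseteq A$ with $I \subseteq L$. This collection is nonempty, since the zero ideal belongs to it, so the construction below is well-defined even when $L$ contains no nonzero ideal. Set
\[
\Imax := \sum_{I \in \mathcal{F}} I,
\]
the linear span of the union $\bigcup_{I \in \mathcal{F}} I$, consisting of all finite sums $x_1 + \dots + x_n$ with each $x_j$ lying in some member of $\mathcal{F}$.

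First I would verify that $\Imax$ is a two-sided ideal. Being a sum of linear subspaces, it is itself a linear subspace. Given $a \in A$ and a typical element $x_1 + \dots + x_n \in \Imax$ with each $x_j \in I_j$ for some $I_j \in \mathcal{F}$, each product $a x_j$ again lies in $I_j$ since $I_j$ is a two-sided ideal, whence $a(x_1 + \dots + x_n) = a x_1 + \dots + a x_n \in \Imax$; the same argument applies on the right. Thus $A\Imax, \Imax A \subseteq \Imax$.

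Next I would check the containment $\Imax \subseteq L$, where the hypothesis that $L$ is a \emph{linear subspace} is essential. Each summand $x_j$ lies in some $I_j \in \mathcal{F}$ and hence in $L$, so any finite sum $x_1 + \dots + x_n$ lies in $L$ as well, using only that $L$ is closed under addition. Therefore $\Imax \in \mathcal{F}$, and by construction it contains every member of $\mathcal{F}$, that is, every two-sided ideal contained in $L$; so it is the largest such ideal. Uniqueness is then immediate: if $J$ is another largest two-sided ideal contained in $L$, then $J \subseteq \Imax$ and $\Imax \subseteq J$ by the maximality of each, whence $J = \Imax$. The only point requiring care, and the closest thing to an obstacle, is the containment $\Imax \subseteq L$, which would fail for a general subset $L$ but holds here precisely because $L$ is closed under addition and scalar multiplication.
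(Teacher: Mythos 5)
Your proof is correct and follows essentially the same route as the paper: the paper takes the linear span of the union of all two-sided ideals contained in $L$ (which coincides with your sum $\sum_{I \in \mathcal{F}} I$), verifies the ideal property, and likewise uses the linearity of $L$ to conclude the containment $\Imax \subseteq L$.
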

\begin{proof}
Set 
\[
I_0(L)=\bigcup \big\{ I : \text{$I$ is a two-sided ideal in $A$ with $I \subseteq L$} \big\}.
\]
Then clearly $I_0(L)\subseteq L$, and it is then easy to see that $aI_0(L),I_0(L)a\subseteq I_0(L)$ for all $a\in A$.
Setting $\Imax(L):=\linspan I_0(L)$, it follows that $\Imax(L)$ is a two-sided ideal in $A$.
Since $L$ is a linear subspace, we have $\Imax(L) \subseteq L$, and it clearly is the largest two-sided ideal contained in $L$.
\end{proof}

%==========================================================================================
\begin{thm}
\label{prp:RangeOfIdeals}
Let $L$ be a semiprime Lie ideal in a \ca{} $A$, let $\Imin$ denote the two-sided ideal in~$A$ generated by $[A,L]$, and let $\Imax$ denote the largest two-sided ideal in~$A$ that is contained in $L$ (which exists by \autoref{prp:Imax}).
Then
\[
L = T(\Imin) = T(\Imax).
\]
Further, a two-sided $I \subseteq A$ satisfies $L=T(I)$ if and only if $\Imin \subseteq I \subseteq \Imax$.

The two-sided ideal $\Imax$ is semiprime.
It follows that a Lie ideal in $A$ is semiprime if and only if it has the form $T(J)$ for a semiprime two-sided ideal~$J$ in~$A$.
\end{thm}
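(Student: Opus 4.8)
The plan is to sandwich $L$ between the various $T$-subspaces by proving the chain of inclusions $L \subseteq T(\Imin) \subseteq T(\Imax) \subseteq L$, and then to read off the equality $L = T(\Imin) = T(\Imax)$ together with the remaining assertions. Throughout I would use two elementary properties of the assignment $T$. First, it is monotone: if $I \subseteq J$ are two-sided ideals, then $T(I) \subseteq T(J)$, directly from the definition. Second, every two-sided ideal $I$ satisfies $I \subseteq T(I)$, since $[A,x] \subseteq I$ whenever $x \in I$. Before starting the chain, I would note that $\Imin \subseteq \Imax$: indeed, \autoref{prp:ArisingFromIdeal} gives $L = T(\sqrt{\Imin})$, and since $\sqrt{\Imin}$ is a two-sided ideal the second property yields $\sqrt{\Imin} \subseteq T(\sqrt{\Imin}) = L$; in particular $\Imin \subseteq L$, so maximality of $\Imax$ (\autoref{prp:Imax}) gives $\Imin \subseteq \Imax$.

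I would then prove the three inclusions in turn. For $L \subseteq T(\Imin)$, take $a \in L$ and observe that $[A,a] \subseteq [A,L] \subseteq \Imin$, as $\Imin$ is by definition the two-sided ideal generated by $[A,L]$. The inclusion $T(\Imin) \subseteq T(\Imax)$ is immediate from monotonicity and $\Imin \subseteq \Imax$. The step I expect to be the crux is $T(\Imax) \subseteq L$, and here the idea is to exploit semiprimeness of $L$: by \autoref{lma:TLieIdeal} the subspace $T(\Imax)$ is a Lie ideal, and by the defining property of $T$ we have $[A, T(\Imax)] \subseteq \Imax \subseteq L$, so that $[T(\Imax), T(\Imax)] \subseteq [A, T(\Imax)] \subseteq L$. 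Since $L$ is a semiprime Lie ideal, this forces $T(\Imax) \subseteq L$. Chaining the three inclusions gives $L \subseteq T(\Imin) \subseteq T(\Imax) \subseteq L$, and hence $L = T(\Imin) = T(\Imax)$.

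For the characterization of the two-sided ideals $I$ with $L = T(I)$, the backward direction is clear: if $\Imin \subseteq I \subseteq \Imax$, then monotonicity gives $L = T(\Imin) \subseteq T(I) \subseteq T(\Imax) = L$, so $T(I) = L$. For the forward direction, assume $L = T(I)$. From $I \subseteq T(I) = L$ and maximality of $\Imax$ we obtain $I \subseteq \Imax$; and since every $a \in L = T(I)$ satisfies $[A,a] \subseteq I$, we get $[A,L] \subseteq I$ and therefore $\Imin \subseteq I$, because $I$ is a two-sided ideal and $\Imin$ is generated by $[A,L]$.

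It remains to see that $\Imax$ is semiprime and to deduce the final equivalence. Since $\Imax$ is a two-sided ideal contained in the semiprime Lie ideal $L$, \autoref{prp:IdealInLieIdeal} gives $\sqrt{\Imax} \subseteq L$; as $\sqrt{\Imax}$ is then a two-sided ideal contained in $L$, maximality of $\Imax$ forces $\sqrt{\Imax} \subseteq \Imax$, so $\Imax = \sqrt{\Imax}$ is semiprime. Finally, if $L$ is a semiprime Lie ideal then $L = T(\Imax)$ with $\Imax$ semiprime, exhibiting the required form; conversely, if $L = T(J)$ for a semiprime two-sided ideal $J$, then $L$ is a semiprime Lie ideal by \autoref{prp:FullNormalizer}(1). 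The only genuinely nontrivial point in the whole argument is the inclusion $T(\Imax) \subseteq L$, and the key is to feed the Lie ideal $T(\Imax)$ back into the semiprimeness hypothesis on $L$; everything else is monotonicity, maximality of $\Imax$, and the two results \autoref{prp:ArisingFromIdeal} and \autoref{prp:IdealInLieIdeal} from the Dixmier-ideal machinery.
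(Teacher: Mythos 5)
Your proposal is correct and takes essentially the same route as the paper: both rely on \autoref{prp:ArisingFromIdeal} (i.e.\ $L = T(\sqrt{\Imin})$) as the key external input, on the semiprimeness argument $[T(\Imax),T(\Imax)] \subseteq [A,T(\Imax)] \subseteq \Imax \subseteq L$ to get $T(\Imax) \subseteq L$, and on \autoref{prp:IdealInLieIdeal} plus maximality to show $\Imax = \sqrt{\Imax}$, with the remaining steps being the same monotonicity and maximality observations. The only difference is organizational: you establish $\Imin \subseteq \Imax$ first and close the single cyclic chain $L \subseteq T(\Imin) \subseteq T(\Imax) \subseteq L$, whereas the paper proves $L = T(\Imin)$ directly (via $T(\Imin) \subseteq T(\sqrt{\Imin}) = L$) and then $L = T(\Imax)$.
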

\begin{proof}
We first show that $L = T(\Imin)$.
Since $[A,L] \subseteq \Imin$, the inclusion $L \subseteq T(\Imin)$ always holds.
By \autoref{prp:ArisingFromIdeal}, we have $L=T(\sqrt{\Imin})$, and since $\Imin \subseteq \sqrt{\Imin}$, we get
\[
T(\Imin) 
\subseteq T(\sqrt{\Imin})
= L.
\]

Next, we show that $L = T(\Imax)$.
Since $\Imin \subseteq T(\Imin) = L$, and since $\Imax$ is the largest two-sided ideal in $A$ that is contained in $L$, we get $\Imin \subseteq \Imax$, and thus $L=T(\Imin) \subseteq T(\Imax)$.
Conversely, we have
\[
[T(\Imax),T(\Imax)]
\subseteq [A,T(\Imax)]
\subseteq \Imax
\subseteq L,
\]
and since $L$ is a semiprime Lie ideal, we get $T(\Imax) \subseteq L$, as desired.

\medskip

Next, let $I$ be a two-sided ideal in $A$.
If $\Imin \subseteq I \subseteq \Imax$, then
\[
L = T(\Imin) \subseteq T(I) \subseteq T(\Imax) = L.
\]
and thus $L=T(I)$.
Conversely, assume that $L=T(I)$.
Since $I \subseteq T(I) \subseteq L$, we get $I \subseteq \Imax$.
On the other hand, we have $[A,L] \subseteq I$ and thus $\Imin \subseteq I$.

Finally, we show that $\Imax$ is semiprime.
Equivalently, we will show that $I_{\mathrm{max}}=\sqrt{I_{\mathrm{max}}}$. 
We have $\Imax \subseteq L$, and thus $\sqrt{\Imax} \subseteq L$ by \autoref{prp:IdealInLieIdeal}.
Since $\Imax$ is the largest two-sided idea in $A$ that is contained in $L$, we get $\sqrt{\Imax} \subseteq \Imax$ and so $\Imax = \sqrt{\Imax}$.
\end{proof}

%==========================================================================================
\autoref{prp:RangeOfIdeals} naturally raises the following question, which, as we will see in \autoref{sec:SemiprimeCommIdl}, is closely related to the question of whether the commutator ideal of a \ca{} is semiprime.

%==========================================================================================
\begin{qst}
\label{qst:ArisingFromIdeal}
Given a semiprime Lie $L$ ideal in a \ca{} $A$, is the two-sided ideal generated by $[A,L]$ automatically semiprime?
\end{qst}

%==========================================================================================
By \autoref{prp:RangeOfIdeals}, assigning to a two-sided ideal $I \subseteq A$ its full normalizer subspace $T(I)$ defines a surjective map from semiprime two-sided ideals in~$A$ to semiprime Lie ideals in~$A$.
However, this map is not necessarily injective.
For example, if $A$ is commuative, then $T(I)=A$ for every two-sided ideal $I \subseteq A$.
More specifically, we have:

%==========================================================================================
\begin{cor}
\label{prp:UniqueIdl}
Let $L$ be a semiprime Lie ideal in a \ca{} $A$, let $\Imin$ denote the two-sided ideal in~$A$ generated by $[A,L]$, and let $\Imax$ denote the largest two-sided ideal in~$A$ that is contained in $L$ (as in \autoref{prp:RangeOfIdeals}).
Then:
\begin{enumerate}
\item
There is a unique two-sided ideal $I$ in $A$ satisfying $L=T(I)$ if and only if $\Imin=\Imax$.
\item
There is a unique semiprime two-sided ideal $I$ in $A$ satisfying $L=T(I)$ if and only if $\sqrt{\Imin}=\Imax$.
\end{enumerate}
\end{cor}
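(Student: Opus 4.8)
The plan is to deduce both statements directly from the characterization in \autoref{prp:RangeOfIdeals}, which tells us that a two-sided ideal $I \subseteq A$ satisfies $L = T(I)$ precisely when $\Imin \subseteq I \subseteq \Imax$. In other words, the two-sided ideals with full normalizer subspace equal to $L$ form exactly the interval $[\Imin,\Imax]$ in the lattice of two-sided ideals in $A$. Both parts of the corollary then reduce to understanding when this interval, respectively its subset of semiprime members, is a singleton.

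For part~(1), uniqueness of a two-sided ideal $I$ with $L = T(I)$ amounts to the interval $[\Imin,\Imax]$ consisting of a single element. Since this interval always contains its two endpoints $\Imin$ and $\Imax$, it is a singleton if and only if $\Imin = \Imax$, which is the claimed equivalence. No further input is needed.

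For part~(2), I would first record two semiprime ideals lying in the relevant interval. By \autoref{prp:RangeOfIdeals} the ideal $\Imax$ is semiprime, so it is always a semiprime ideal with $L = T(\Imax)$. Moreover $\sqrt{\Imin}$ is semiprime by definition, and since $\Imin \subseteq \Imax$ with $\Imax$ semiprime, monotonicity of the semiprime closure gives $\sqrt{\Imin} \subseteq \sqrt{\Imax} = \Imax$; together with $\Imin \subseteq \sqrt{\Imin}$ this places $\sqrt{\Imin}$ inside $[\Imin,\Imax]$, so that $L = T(\sqrt{\Imin})$ as well. The key step is then to show that every semiprime ideal in the interval is trapped between these two: if $I$ is semiprime with $\Imin \subseteq I \subseteq \Imax$, then $\sqrt{\Imin} \subseteq \sqrt{I} = I$, whence $\sqrt{\Imin} \subseteq I \subseteq \Imax$. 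Thus the semiprime two-sided ideals $I$ with $L = T(I)$ are exactly the semiprime members of the smaller interval $[\sqrt{\Imin},\Imax]$, whose endpoints are themselves semiprime. Consequently there is a unique such ideal if and only if $\sqrt{\Imin} = \Imax$.

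I do not expect a genuine obstacle here, as the corollary is essentially a lattice-theoretic repackaging of \autoref{prp:RangeOfIdeals}. The only point requiring care is the verification that $\sqrt{\Imin}$ lies below $\Imax$, which relies on the semiprimeness of $\Imax$ established in \autoref{prp:RangeOfIdeals} and on the monotonicity of the radical $I \mapsto \sqrt{I}$; once this is in place, both equivalences follow by comparing the endpoints of the respective intervals.
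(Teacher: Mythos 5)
Your proposal is correct and follows exactly the route the paper intends: the corollary is stated without proof as an immediate consequence of \autoref{prp:RangeOfIdeals}, namely that the ideals $I$ with $L=T(I)$ form the interval $[\Imin,\Imax]$, whose semiprime members (using $\sqrt{I}=I$ for semiprime $I$, monotonicity of the radical, and the semiprimeness of $\Imax$) form the interval $[\sqrt{\Imin},\Imax]$ with semiprime endpoints. Your careful verification that $\sqrt{\Imin}\subseteq\Imax$ is precisely the point the paper leaves implicit, and it is handled correctly.
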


%==========================================================================================
We will see in \autoref{prp:UniqueRealizingIdeal} that $\Imin=\Imax$ for \ca{s} that are generated by their commutators.

In general, the map $I \mapsto T(I)$ defines a natural bijection onto the family of semiprime Lie ideals if we restrict to those semiprime two-sided ideals of the form $I_{\mathrm{max}}$, namely those that arise as the largest two-sided ideal contained in some semiprime Lie ideal.
The next result shows that such ideals can be abstractly characterized as the semiprime two-sided ideals $I$ such that $A/I$ contains no nonzero, commutative two-sided ideals.

%==========================================================================================
\begin{thm}
\label{prp:Correspondence}
Let $A$ be a \ca{}.
Then the assignment $I \mapsto T(I)$ defines a natural bijection between the following sets:
\begin{enumerate}
\item[(a)] 
Semiprime two-sided ideals $I$ in $A$ such that $A/I$ contains no nonzero, commutative two-sided ideal.
\item[(b)] 
Semiprime Lie ideals in~$A$.
\end{enumerate}
The inverse of this assignment maps a semiprime Lie ideal $L$ in~$A$ to the largest two-sided ideal $I(L)$ of $A$ contained in $L$.

Further, the above bijection restricts to a bijection between the following sets: 
\begin{enumerate}
\item[(a')] 
Prime two-sided ideals $I$ in $A$ such that $A/I$ contains no nonzero, commutative two-sided ideal.
\item[(b')] 
Prime Lie ideals in~$A$.
\end{enumerate}
\end{thm}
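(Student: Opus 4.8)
The plan is to realize the asserted bijection through the two canonical maps already at hand: the full normalizer $I \mapsto T(I)$, which will go from the set in (a) to the set in (b), and the assignment sending a semiprime Lie ideal $L$ to the largest two-sided ideal $\Imax$ contained in it (which exists by \autoref{prp:Imax}). The bulk of the work reduces to checking that these maps take values in the correct sets and are mutually inverse. The identity $T(\Imax) = L$ for every semiprime Lie ideal $L$ is already recorded in \autoref{prp:RangeOfIdeals}, which also supplies that $\Imax$ is semiprime, so one half of one round-trip is free.

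The first genuine step is to show that the condition defining (a) is exactly what singles out the ideals of the form $\Imax$. For a semiprime Lie ideal $L$, I would argue that $A/\Imax$ contains no nonzero commutative two-sided ideal as follows: if $J$ is a two-sided ideal with $J/\Imax$ commutative, then $[J,J] \subseteq \Imax \subseteq L$, and since $J$ is in particular a Lie ideal and $L$ is semiprime, we get $J \subseteq L$; maximality of $\Imax$ then forces $J \subseteq \Imax$, so $J/\Imax = 0$. This places $\Imax$ in (a) and, combined with \autoref{prp:RangeOfIdeals}, completes the round-trip starting from (b). For the round-trip starting from $I$ in (a), set $L := T(I)$, a semiprime Lie ideal by \autoref{prp:FullNormalizer}(1). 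Since $I \subseteq T(I) = L$ is a two-sided ideal, we have $I \subseteq \Imax(L)$; conversely $\Imax(L) \subseteq L = T(I)$ gives $\pi(\Imax(L)) \subseteq Z(A/I)$ by \autoref{prp:T-Ideal}, for the quotient map $\pi \colon A \to A/I$, so $\Imax(L)/I$ is a commutative two-sided ideal in $A/I$ and hence zero by the defining property of (a). Thus $\Imax(T(I)) = I$, and the two maps are mutually inverse bijections between (a) and (b). Naturality is immediate from the explicit, canonical description of the maps.

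It remains to check that this bijection restricts to (a') and (b'). For $I$ in (a'), the ideal $I$ is prime, hence proper, so $A/I \neq 0$; were $A/I$ abelian it would be a nonzero commutative two-sided ideal in itself, contradicting the hypothesis, so $A/I$ is nonabelian and \autoref{prp:FullNormalizer}(2) yields that $T(I)$ is a prime Lie ideal. The reverse direction is the main obstacle: given a prime Lie ideal $L$ (which is automatically semiprime, by taking $K_1=K_2$ in the definition), I must show that $I := \Imax(L)$ is prime. Here I would pass to the quotient ring $A/I$, which is semiprime, and exploit the standard fact that in a semiprime ring the vanishing of a product of ideals is symmetric: if two-sided ideals $J_1,J_2$ satisfy $J_1 J_2 \subseteq I$, then $(J_2 J_1)^2 = J_2 (J_1 J_2) J_1 \subseteq I$ shows that the image of $J_2 J_1$ in $A/I$ is a nilpotent ideal, hence zero, so $J_2 J_1 \subseteq I$ as well, and therefore $[J_1,J_2] \subseteq J_1 J_2 + J_2 J_1 \subseteq I \subseteq L$. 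Viewing $J_1,J_2$ as Lie ideals and invoking primeness of $L$ gives $J_1 \subseteq L$ or $J_2 \subseteq L$, whereupon maximality of $I = \Imax(L)$ yields $J_1 \subseteq I$ or $J_2 \subseteq I$, which is precisely primeness of $I$. Converting the multiplicative hypothesis $J_1 J_2 \subseteq I$ into the Lie-bracket hypothesis $[J_1,J_2] \subseteq L$ via semiprimeness is the crux of this final step; once it is in place, the bijection (a')$\leftrightarrow$(b') follows by restriction.
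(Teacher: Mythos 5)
Your proposal is correct and follows essentially the same route as the paper: the same mutual-inverse verification via \autoref{prp:RangeOfIdeals}, \autoref{prp:Imax}, \autoref{prp:T-Ideal} and \autoref{prp:FullNormalizer}, and the same $J_2J_1J_2J_1$ trick to convert $J_1J_2 \subseteq \Imax$ into $[J_1,J_2]\subseteq L$ in the prime case. Your only deviation is a harmless streamlining: where the paper routes Claim~1 through Herstein's lemma to place the preimage inside $T(\Imax)$, you observe directly that commutativity of the quotient ideal gives $[J,J]\subseteq \Imax \subseteq L$ and then apply semiprimeness of $L$ (do also note, as the paper does, that $\Imax(L)\subseteq L \neq A$ is proper, which primeness of the two-sided ideal requires).
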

\begin{proof}
Let $\mathcal{I}_{\mathrm{SP}}$ denote the family of semiprime two-sided ideals in $A$, and let $\mathcal{L}_{\mathrm{SP}}$ denote the family of semiprime Lie ideals in $A$.
We have seen in \autoref{prp:RangeOfIdeals} that $I \mapsto T(I)$ defines a surjective map
\[
T \colon \mathcal{I}_{\mathrm{SP}} \to \mathcal{L}_{\mathrm{SP}}.
\]
Further, for each $L \in \mathcal{L}_{\mathrm{SP}}$, the two-sided ideal $I_{\mathrm{max}}(L)$ is semiprime and satisfies $L=T(I_{\mathrm{max}}(L))$.
Thus, $T$ defines a bijection
\[
\mathcal{I}_{\mathrm{max}} := \big\{ I_{\mathrm{max}}(L)\colon L \in \mathcal{L}_{\mathrm{SP}} \big\} 
\to \mathcal{L}_{\mathrm{SP}}.
\]
We proceed to show that a two-sided ideal belongs to $\mathcal{I}_{\mathrm{max}}$ if and only if it satisfies the conditions listed in (a).

\medskip

\textbf{Claim 1:}
\emph{Given a semiprime Lie ideal $L \subseteq A$, the two-sided ideal $I_{\mathrm{max}}(L)$ satisfies the conditions listed in (a).}
First, $I_{\mathrm{max}}(L)$ is semiprime by \autoref{prp:RangeOfIdeals}.
Next, to see that $A/I_{\mathrm{max}}(L)$ contains no nonzero, commutative two-sided ideal, let $K_0 \subseteq A/I_{\mathrm{max}}(L)$ be a commutative two-sided ideal.
Let $\pi \colon A \to A/I_{\mathrm{max}}(L)$ denote the quotient map, and set $K := \pi^{-1}(K_0)$.
Then
\[
\pi( [K,K] ) = [K_0, K_0] = \{0\}.
\]

Since $A/I_{\mathrm{max}}(L)$ is a semiprime ring without $2$-torsion, it follows from \cite[Lemma~1]{Her70LieStructure} that $K_0 \subseteq Z(A/I_{\mathrm{max}}(L))$, and using \autoref{prp:T-Ideal} we get
\[
K \subseteq \pi^{-1}(Z(A/I_{\mathrm{max}}(L))) = T(I_{\mathrm{max}}(L)).
\]
Then
\[
[K,K] 
\subseteq [A,T(I_{\mathrm{max}}(L))]
\subseteq I_{\mathrm{max}}(L)
\subseteq L,
\]
and since $L$ is a semiprime Lie ideal, we get $K \subseteq L$.
By maximality of $I_{\mathrm{max}}(L)$, we get $K\subseteq I_{\mathrm{max}}(L)$, that is, $K_0=\{0\}$.

\medskip

\textbf{Claim 2:}
\emph{Given a semiprime two-sided ideal $I \subseteq A$ satisfying the conditions listed in~(a), we have $I_{\mathrm{max}}(T(I)) = I$.}
First, we have $I \subseteq T(I)$, and consequently $I \subseteq I_{\mathrm{max}}(T(I))$.
It remains to show that $I_{\mathrm{max}}(T(I)) \subseteq I$.
Let $\pi \colon A \to A/I$ denote the quotient map.
By \autoref{prp:T-Ideal}, we have $\pi(T(I)) \subseteq Z(A/I)$.
It follows that $\pi(I_{\mathrm{max}}(T(I)))$ is a two-sided ideal in~$A/I$ contained in the center of $A/I$, and hence a commutative two-sided ideal in $A/I$.
By assumption, this implies that $\pi(I_{\mathrm{max}}(T(I)))=\{0\}$, that is, $I_{\mathrm{max}}(T(I)) \subseteq I$, as desired.

\medskip

We have shown that the assignment $I\mapsto T(I)$ is a bijection (with inverse given by $L\mapsto I_{\mathrm{max}}(L)$) between the sets described in~(a) and~(b).
Finally, let us show that these assignments restrict to bijections between the ideals in (a') and (b').

In one direction, if $I$ is a prime two-sided ideal in~$A$ as in (a'), then $A/I$ is nonabelian, and it follows from \autoref{prp:FullNormalizer} that $T(I)$ is a prime Lie ideal.

Conversely, assume that $L$ is a prime Lie ideal.
We have already seen that~$I_{\mathrm{max}}(L)$ is a semiprime ideal in $A$ as in (a) satisfying $L = T(I_{\mathrm{max}}(L))$.
Since $L \neq A$, it follows that $I_{\mathrm{max}}(L) \neq A$.
To show that $I_{\mathrm{max}}(L)$ is a prime two-sided ideal, let $I_1,I_2 \subseteq A$ be two-sided ideals such that $I_1I_2 \subseteq I_{\mathrm{max}}(L)$.
Since $I_{\mathrm{max}}(L)$ is an ideal, we deduce that $I_2I_1I_2I_1 \subseteq I_{\mathrm{max}}(L)$.
Using that $I_{\mathrm{max}}(L)$ is semiprime, it follows that $I_2I_1 \subseteq I_{\mathrm{max}}(L)$.
We get
\[
[I_1,I_2] 
\subseteq I_1I_2 + I_2I_1 
\subseteq I_{\mathrm{max}}(L) 
\subseteq T(I_{\mathrm{max}}(L))
= L.
\]
Since $L$ is a prime Lie ideal, we get $I_1 \subseteq L$ or $I_2 \subseteq L$.
By maximality of $I_{\mathrm{max}}(L)$, it follows that $I_1 \subseteq I_{\mathrm{max}}(L)$ or $I_2 \subseteq I_{\mathrm{max}}(L)$, as desired.
\end{proof}

%==========================================================================================
%==========================================================================================
\section{Lie ideals in \texorpdfstring{$C^*$}{C*}-algebras with semiprime commutator ideal}
\label{sec:SemiprimeCommIdl}

%==========================================================================================
In this section, we study \autoref{qst:ArisingFromIdeal}:
Given a semiprime Lie ideal $L$ in a \ca{} $A$, is the two-sided ideal generated by $[A,L]$ automatically semiprime?
Note that $A$ is always a semiprime Lie ideal itself, and thus a particular instance of this question is whether the commutator ideal (that is, the two-sided ideal generated by $[A,A]$) is semiprime;
see \autoref{qst:SemiprimeCommIdl}.
Surprisingly, we will see that \autoref{qst:ArisingFromIdeal} reduces to this special case:
Given a \ca{} $A$, the two-sided ideal generated by $[A,L]$ is semiprime for every semiprime Lie ideal $L \subseteq A$ if (and only if) the two-sided ideal generated by $[A,A]$ is semiprime;
see \autoref{prp:SemiprimeCommIdl}.
The latter is trivially the case if the commutator ideal is all of $A$, a situation that is studied in more detail in \autoref{sec:GenByComm}.
More generally, it holds whenever the commutator ideal is closed, which is automatic, for example, in von Neumann algebras;
see \autoref{exa:VNA}.

\medskip

%==========================================================================================
The next result implies in particular that for every two-sided ideal $I$ in a \ca{}, the two-sided ideal $I\sqrt{I}$ is Dixmier, since it is the supremum of the Dixmier ideals $\interior(I)^{1+\varepsilon}$ for $\varepsilon>0$.
The theory of powers of Dixmier ideals is briefly recalled in \autoref{pgr:DixmierIdls}.

%==========================================================================================
\begin{lma}
\label{prp:WrappingIdl}
Let $I$ be a two-sided ideal in a \ca{} $A$, let~$J=\interior(I)$ denote its Dixmier interior, and let~$K=\exterior(I)$ denote its Dixmier closure.
Then
\[
\sqrt{J} = \sqrt{I} = \sqrt{K}, \andSep
J\sqrt{J} = I\sqrt{I} = K\sqrt{K} 
= \bigcup_{\varepsilon>0} J^{1+\varepsilon}
= \bigcup_{\varepsilon>0} K^{1+\varepsilon}.
\]
Further, we have $I\sqrt{I} = \sqrt{I}I$.
\end{lma}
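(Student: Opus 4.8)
The plan is to write $R:=\sqrt I$ and proceed in three stages: the radical identities $\sqrt J=\sqrt I=\sqrt K$, a description of the three products as one common union of Dixmier powers, and a squeezing argument (run on both sides) that yields all the equalities simultaneously, including $I\sqrt I=\sqrt I\,I$. First I would prove the radical identities. From the sandwich $J\subseteq I\subseteq K$ in \autoref{pgr:DixmierIdls} and monotonicity of the semiprime closure I get $\sqrt J\subseteq\sqrt I\subseteq\sqrt K$. For the reverse inclusion I use $K^2=K^{1+1}\subseteq J$ from \autoref{pgr:DixmierIdls}, observe that the Dixmier power $K^2$ coincides with the ideal-theoretic product $K\cdot K$ (power law with $s=t=1$), and invoke the standard fact $\sqrt{K\cdot K}=\sqrt K$ (a prime ideal contains $K\cdot K$ if and only if it contains $K$). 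This gives $\sqrt K=\sqrt{K^2}\subseteq\sqrt J$, so all three coincide, with common value $R$.

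Next I would identify the products. Since $J$ is a Dixmier ideal, it is its own Dixmier interior, so \cite[Theorem~5.9]{GarKitThi23arX:SemiprimeIdls} gives $\sqrt J=\bigcup_n J^{1/n}$, and likewise $\sqrt K=\bigcup_n K^{1/n}$. Using that ideal products distribute over increasing unions and that $J^1 J^{1/n}=J^{1+1/n}$ by the power law, I get $J\sqrt J=\bigcup_n J^{1+1/n}$; since the exponents $1/n$ are cofinal at $0$ and $J^{1+\varepsilon}\subseteq J^{1+\varepsilon'}$ whenever $\varepsilon\ge\varepsilon'$, this equals $\bigcup_{\varepsilon>0}J^{1+\varepsilon}$, and symmetrically $K\sqrt K=\bigcup_{\varepsilon>0}K^{1+\varepsilon}$.

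The crux, and the step I expect to be the main obstacle, is the equality $\bigcup_{\varepsilon>0}J^{1+\varepsilon}=\bigcup_{\varepsilon>0}K^{1+\varepsilon}$. The inclusion ``$\subseteq$'' is clear from $J\subseteq K$ together with monotonicity of the power operation in the ideal, both of which follow from the positive-part description in \autoref{pgr:DixmierIdls}: if $J_+\subseteq K_+$ then $(J^s)_+=\{a^s:a\in J_+\}\subseteq\{a^s:a\in K_+\}=(K^s)_+$. For ``$\supseteq$'' I fix $\varepsilon>0$, pick $\delta\in(0,\varepsilon)$, and raise the inclusion $K^{1+\delta}\subseteq J$ to the power $t:=(1+\varepsilon)/(1+\delta)>1$, using the composition law $(K^{1+\delta})^t=K^{(1+\delta)t}=K^{1+\varepsilon}$ (again immediate from the positive-part description) and monotonicity to obtain $K^{1+\varepsilon}\subseteq J^t=J^{1+(t-1)}\subseteq\bigcup_{\delta'>0}J^{1+\delta'}$. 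The delicate point is precisely this exponent rescaling: one must choose $\delta<\varepsilon$ so that the resulting exponent $t$ strictly exceeds $1$, placing $J^t$ inside the union.

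Finally I assemble everything by squeezing. From $J\subseteq I\subseteq K$ I get $JR\subseteq IR\subseteq KR$; but $JR=J\sqrt J=\bigcup_{\varepsilon}J^{1+\varepsilon}=\bigcup_{\varepsilon}K^{1+\varepsilon}=K\sqrt K=KR$, so all terms coincide and $I\sqrt I=J\sqrt J=K\sqrt K=\bigcup_{\varepsilon}J^{1+\varepsilon}=\bigcup_{\varepsilon}K^{1+\varepsilon}$. For the last assertion $I\sqrt I=\sqrt I\,I$ I run the identical squeeze on the left: $RJ\subseteq RI\subseteq RK$, and since $J^{1/n}J^1=J^{1+1/n}=J^1 J^{1/n}$ by the power law, $RJ=\bigcup_{\varepsilon}J^{1+\varepsilon}$ and $RK=\bigcup_{\varepsilon}K^{1+\varepsilon}$ again coincide, forcing $\sqrt I\,I=RI=\bigcup_{\varepsilon}J^{1+\varepsilon}=I\sqrt I$. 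Thus the two one-sided products agree because the common union of Dixmier powers is manifestly left--right symmetric.
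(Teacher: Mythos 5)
Your proof is correct, and its overall architecture matches the paper's: identify $J\sqrt{J}$ and $K\sqrt{K}$ with the unions $\bigcup_{\varepsilon>0}J^{1+\varepsilon}$ and $\bigcup_{\varepsilon>0}K^{1+\varepsilon}$ via the power law, show these two unions coincide, and squeeze $I\sqrt{I}$ (and then $\sqrt{I}I$) between them. Two of your steps differ in mechanism, though. For the radical identities, the paper simply quotes \cite[Theorem~5.9]{GarKitThi23arX:SemiprimeIdls}, which already yields $\sqrt{J}=\sqrt{I}=\sqrt{K}=\bigcup_{\varepsilon>0}J^{\varepsilon}=\bigcup_{\varepsilon>0}K^{\varepsilon}$ in one stroke; your elementary derivation from $K^2=K\cdot K\subseteq J$ and $\sqrt{K\cdot K}=\sqrt{K}$ is a valid, more self-contained substitute. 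For the crux inclusion $\bigcup_{\varepsilon>0}K^{1+\varepsilon}\subseteq\bigcup_{\varepsilon>0}J^{1+\varepsilon}$, the paper argues additively: it writes $K^{1+2\varepsilon}=K^{1+\varepsilon}K^{\varepsilon}\subseteq JK^{\varepsilon}$ using $I^sI^t=I^{s+t}$, takes unions, and identifies $\bigcup_{\varepsilon>0}JK^{\varepsilon}=J\sqrt{K}=J\sqrt{J}$; you argue multiplicatively, rescaling the exponent via the composition law $(K^{1+\delta})^{t}=K^{(1+\delta)t}$ together with monotonicity of $D\mapsto D^{t}$ in the ideal. Both of your auxiliary facts are legitimate, but note they are not among the identities explicitly recalled in \autoref{pgr:DixmierIdls}; they follow from the uniqueness of the Dixmier ideal with prescribed positive part, so your route draws slightly more from the underlying theory of \cite{GarKitThi23arX:SemiprimeIdls}, whereas the paper's additive splitting stays entirely within the recalled identities $I^sI^t=I^{s+t}$ and $K^{1+\varepsilon}\subseteq J$. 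The payoff of your version is a cleaner conceptual picture (inclusions between Dixmier ideals can be raised to powers) and a radical computation that does not lean on the full strength of the cited Theorem~5.9.
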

\begin{proof}
By \cite[Theorem~5.9]{GarKitThi23arX:SemiprimeIdls}, we have
\[
\sqrt{J} = \sqrt{I} = \sqrt{K}
= \bigcup_{\varepsilon>0} J^{\varepsilon}
= \bigcup_{\varepsilon>0} K^{\varepsilon}.
\]
and using the basic results about Dixmier ideals recalled in \autoref{pgr:DixmierIdls}, 
%\cite[Theorem~3.9]{GarKitThi23arX:SemiprimeIdls} 
we deduce that
\[
J\sqrt{J} 
= J \bigcup_{\varepsilon>0} J^{\varepsilon}
= \bigcup_{\varepsilon>0} J^{1+\varepsilon}.
\]
Analogously, we see that $K\sqrt{K}  
= \bigcup_{\varepsilon>0} K^{1+\varepsilon}$.
The inclusions $J\sqrt{J} \subseteq I\sqrt{I} \subseteq K\sqrt{K}$ are clear.
For every $\varepsilon>0$, we have $K^{1+\varepsilon} \subseteq J$, and 
using \cite[Theorem~3.9]{GarKitThi23arX:SemiprimeIdls}
at the first step, we get
%by \cite[Theorem~4.4]{GarKitThi23arX:SemiprimeIdls}.
%Using also \cite[Theorem~3.9]{GarKitThi23arX:SemiprimeIdls}, we get
\[
K^{1+2\varepsilon} 
= K^{1+\varepsilon} K^{\varepsilon} 
\subseteq J K^{\varepsilon}.
\]
It follows that
\[
K\sqrt{K}  
= \bigcup_{\varepsilon>0} K^{1+\varepsilon}
= \bigcup_{\varepsilon>0} K^{1+2\varepsilon}
\subseteq \bigcup_{\varepsilon>0} JK^{\varepsilon}
= J \bigcup_{\varepsilon>0} K^{\varepsilon}
= J \sqrt{K}
= J \sqrt{J}.
\]
An analogous argument shows that
\[
\sqrt{I}I 
= \sqrt{K}K
= \bigcup_{\varepsilon>0} K^{1+\varepsilon}
= K\sqrt{K}
= I\sqrt{I},
\]
as desired.
\end{proof}

%==========================================================================================
Given a \ca{} $A$ we use $\widetilde{A}$ to denote its minimal unitization.
Then the (algebraic\footnote{If we were interested in the \emph{closed} two-sided ideal that $V$ generates, then this simply equal to $\overline{AVA}$ since $A$ has an approximate identity, and there is no need to 
consider the unitization of $A$.}) two-sided ideal of $A$ generated by a subspace $V \subseteq A$ is $\widetilde{A}V\widetilde{A}$.
If $A$ is nonunital, then $AVA$ is a two-sided ideal that may be strictly smaller than $\widetilde{A}V\widetilde{A}$.
If $L \subseteq A$ is a Lie ideal, then $\widetilde{A}L\widetilde{A} = \widetilde{A}L = L\widetilde{A}$.
For the commutator subspace $[A,A]$, we have $\widetilde{A}[A,A]\widetilde{A} = A[A,A]A = A[A,A] = [A,A]A$ by \cite[Proposition~3.2]{GarThi24PrimeIdealsCAlg}.

The next result shows that for a semiprime Lie ideal $L$ in a \ca{} $A$, the two-sided ideals generated by $[L,L]$ and by $[A,L]$ are `almost equal'.
This suggests that they may in fact be equal, and we know of no examples where this is not the case. 
We verify the equality of these two ideals in the case that the commutator ideal is semiprime (\autoref{prp:SemiprimeCommIdl}), but it remains open in general.

%==========================================================================================
\begin{prp}
\label{prp:ISqrtI}
Let $L \subseteq A$ be a semiprime Lie ideal in a \ca{} $A$, let~$I_0$ and~$I_{\mathrm{min}}$ denote the two-sided ideals in $A$ generated by $[L,L]$ and $[A,L]$ respectively, and let~$K=\exterior(I_{\mathrm{min}})$ denote the Dixmier closure of $I_{\mathrm{min}}$.
Then
\begin{align*}\tag{3.1}
\label{eq:ISqrtI}
I_0 \sqrt{I_{\mathrm{min}}} &= I_{\mathrm{min}} \sqrt{I_{\mathrm{min}}} = [A,A]\sqrt{I_{\mathrm{min}}} = \widetilde{A}[A,A]\widetilde{A}\sqrt{I_{\mathrm{min}}}= \bigcup_{\varepsilon>0} K^{1+\varepsilon}\\
&\subseteq I_0 \subseteq I_{\mathrm{min}} \subseteq K.
\end{align*}
\end{prp}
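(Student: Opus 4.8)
The plan is to deduce every asserted identity from the single inclusion $\Imin\sqrt{\Imin}\subseteq I_0$, which I would prove by a short commutator computation exploiting that $\sqrt{\Imin}$ is at once a two-sided ideal and a subset of the Lie ideal $L$.

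First I would collect the ingredients. By \autoref{prp:ArisingFromIdeal} we have $I_0\subseteq\Imin\subseteq\sqrt{\Imin}=\sqrt{I_0}$, and since $I_0\subseteq L$, \autoref{prp:IdealInLieIdeal} gives $\sqrt{\Imin}=\sqrt{I_0}\subseteq L$. Writing $\sqrt{\Imin}=\bigcup_{\varepsilon>0}K^{\varepsilon}$ with $K^{\varepsilon}=K^{\varepsilon/2}K^{\varepsilon/2}$ as in \autoref{pgr:DixmierIdls} and \autoref{prp:WrappingIdl}, one sees that $\sqrt{\Imin}$ is idempotent and absorbs $A$ and $\widetilde{A}$ on either side: $\sqrt{\Imin}\,\sqrt{\Imin}=\sqrt{\Imin}=A\sqrt{\Imin}=\sqrt{\Imin}A$. \autoref{prp:WrappingIdl} applied to $\Imin$ yields $\Imin\sqrt{\Imin}=\bigcup_{\varepsilon>0}K^{1+\varepsilon}$, settling the fourth equality, and I would also use the identity $\widetilde{A}[A,A]\widetilde{A}=[A,A]A$ recalled before the statement. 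The inclusions $I_0\subseteq\Imin\subseteq K$ and $I_0\sqrt{\Imin}\subseteq\Imin\sqrt{\Imin}$ are immediate.

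The crux---and the step I expect to be the main obstacle---is the inclusion $[A,L]\sqrt{\Imin}\subseteq I_0$. For $z\in A$, $\ell\in L$ and $s\in\sqrt{\Imin}$ I would invoke the identity
\[
[z,\ell]\,s=[zs,\ell]-z[s,\ell].
\]
The decisive observation is that, because $\sqrt{\Imin}$ is a two-sided ideal contained in $L$, both $s$ and $zs$ lie in $L$; hence $[zs,\ell]$ and $[s,\ell]$ lie in $[L,L]\subseteq I_0$, and since $I_0$ is a two-sided ideal also $z[s,\ell]\in I_0$, so $[z,\ell]s\in I_0$. This is exactly the gain over the naive estimate: multiplying an element of $L$ by an element of the ideal $\sqrt{\Imin}$ keeps it in $L$, promoting the commutators from $[A,L]$, which generates only $\Imin$, to $[L,L]$, which generates $I_0$. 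Using $\Imin=\widetilde{A}[A,L]\widetilde{A}$ and $\widetilde{A}\sqrt{\Imin}=\sqrt{\Imin}$, this upgrades to $\Imin\sqrt{\Imin}=\widetilde{A}[A,L]\sqrt{\Imin}\subseteq\widetilde{A}I_0\subseteq I_0$.

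With $\Imin\sqrt{\Imin}\subseteq I_0$ established, the rest closes formally. Idempotency gives $\Imin\sqrt{\Imin}=(\Imin\sqrt{\Imin})\sqrt{\Imin}\subseteq I_0\sqrt{\Imin}$, so $I_0\sqrt{\Imin}=\Imin\sqrt{\Imin}$. For the terms involving $[A,A]$, the identity $[a,b]u=[a,bu]-b[a,u]$ with $u,bu\in\sqrt{\Imin}\subseteq L$ shows $[A,A]\sqrt{\Imin}\subseteq\Imin$, hence $[A,A]\sqrt{\Imin}\subseteq\Imin\sqrt{\Imin}$ after one more use of idempotency; conversely $[L,L]\subseteq[A,A]$ gives $I_0\subseteq\widetilde{A}[A,A]\widetilde{A}$, so $\Imin\sqrt{\Imin}=I_0\sqrt{\Imin}\subseteq\widetilde{A}[A,A]\widetilde{A}\sqrt{\Imin}$, while $\widetilde{A}[A,A]\widetilde{A}\sqrt{\Imin}=[A,A]\sqrt{\Imin}$ follows from $\widetilde{A}[A,A]\widetilde{A}=[A,A]A$ and $A\sqrt{\Imin}=\sqrt{\Imin}$. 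This gives all five equalities, and the final chain $\bigcup_{\varepsilon>0}K^{1+\varepsilon}=\Imin\sqrt{\Imin}\subseteq I_0\subseteq\Imin\subseteq K$ then follows from the crux and the elementary inclusions.
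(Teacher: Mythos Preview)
Your proof is correct and uses essentially the same ingredients as the paper's---a Leibniz-type commutator identity, the inclusion $\sqrt{\Imin}\subseteq L$, idempotency of $\sqrt{\Imin}$, and \autoref{prp:WrappingIdl}. The only difference is organizational: the paper closes the cycle of equalities by showing $\widetilde{A}[A,A]\widetilde{A}\sqrt{\Imin}\subseteq I_0\sqrt{\Imin}$ via the intermediate step $[A,\sqrt{\Imin}]\subseteq[\sqrt{\Imin},\sqrt{\Imin}]\subseteq I_0$, whereas you isolate the slightly sharper inclusion $\Imin\sqrt{\Imin}\subseteq I_0$ as the crux and derive all the equalities from it.
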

\begin{proof}
We abbreviate $I_{\mathrm{min}}$ to $I$ throughout this proof. 
We first show the first three equalities in \eqref{eq:ISqrtI}.
The inclusions $I_0 \sqrt{I} \subseteq I \sqrt{I}$ and $[A,A]\sqrt{I} \subseteq \widetilde{A}[A,A]\widetilde{A}\sqrt{I}$ are clear.
Further, since $[A,L]$ is a Lie ideal, we have $I = [A,L]\widetilde{A}$ and therefore
\[
I \sqrt{I} 
= [A,L]\widetilde{A}\sqrt{I} 
\subseteq [A,A]\sqrt{I}.
\]

It remains to show that $\widetilde{A}[A,A]\widetilde{A}\sqrt{I}$ is contained in $I_0 \sqrt{I}$.
Using that $[a,b]x = [a,bx] - b[a,x]$ for $a,b,x \in A$, we get
\begin{align*}\tag{3.2}\label{eq:3.2}
\widetilde{A} [A,A]\sqrt{I} 
\subseteq \widetilde{A} [A,A\sqrt{I}] + \widetilde{A}A[A,\sqrt{I}]
\subseteq \widetilde{A} [A,\sqrt{I}].
\end{align*}
Using at the first step that $\sqrt{I}$ is idempotent (that is, $\sqrt{I}=\sqrt{I}\sqrt{I}$) by \cite[Theorem~5.2]{GarKitThi23arX:SemiprimeIdls}, using at the second step that $[a,xy]=[ax,y]+[ya,x]$ for $a,x,y \in A$, and using at the fourth step that $\sqrt{I}\subseteq L$ by \autoref{prp:ArisingFromIdeal}, we get 
\begin{align*}\tag{3.3}\label{eq:3.3}
[A,\sqrt{I}]
= [A,\sqrt{I}\sqrt{I}]
\subseteq [A\sqrt{I},\sqrt{I}] + [\sqrt{I}A,\sqrt{I}]
\subseteq [\sqrt{I},\sqrt{I}]
\subseteq \widetilde{A} [L,L] \widetilde{A}
= I_0.
\end{align*}
Using again that $\sqrt{I}$ is idempotent at the first step, we get 
\[
\widetilde{A}[A,A]\widetilde{A}\sqrt{I}
\subseteq \widetilde{A}[A,A]\sqrt{I}\sqrt{I}
\stackrel{\eqref{eq:3.2}}{\subseteq} \widetilde{A} [A,\sqrt{I}]\sqrt{I}
\stackrel{\eqref{eq:3.3}}{\subseteq} \widetilde{A} I_0 \sqrt{I}
\subseteq I_0 \sqrt{I}.
\]

We have shown the first three equalities in \eqref{eq:ISqrtI}.
The fourth follows from \autoref{prp:WrappingIdl}.
Finally, the inclusions $I_0 \sqrt{I} \subseteq I_0 \subseteq I \subseteq K$ are clear.
\end{proof}

%==========================================================================================
The next result gives a positive answer to \autoref{qst:ArisingFromIdeal} for \ca{} whose commutator ideal is semiprime.

%==========================================================================================
\begin{thm}
\label{prp:SemiprimeCommIdl}
Let $A$ be a \ca{} whose commutators generate a semiprime two-sided ideal.
Let $L \subseteq A$ be a semiprime Lie ideal, and let~$I_0$ and~$I_{\mathrm{min}}$ denote the two-sided ideals in $A$ generated by $[L,L]$ and $[A,L]$ respectively.
Then
\[
I_0 = I_{\mathrm{min}} = \sqrt{I_{\mathrm{min}}}, \andSep
L = T(I_{\mathrm{min}}).
\]
In particular, $I_{\mathrm{min}}$ is a semiprime two-sided ideal.
\end{thm}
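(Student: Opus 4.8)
The plan is to reduce the entire statement to the single inclusion $\sqrt{I_{\mathrm{min}}} \subseteq I_0$. Indeed, the reverse inclusions $I_0 \subseteq I_{\mathrm{min}} \subseteq \sqrt{I_{\mathrm{min}}}$ are already recorded in \autoref{prp:ArisingFromIdeal}, and the identity $L = T(\sqrt{I_{\mathrm{min}}})$ is provided there as well (while $L = T(I_{\mathrm{min}})$ holds unconditionally by \autoref{prp:RangeOfIdeals}). So once $\sqrt{I_{\mathrm{min}}} \subseteq I_0$ is established, the three ideals get squeezed into equality $I_0 = I_{\mathrm{min}} = \sqrt{I_{\mathrm{min}}}$; the equation $I_{\mathrm{min}} = \sqrt{I_{\mathrm{min}}}$ is precisely the assertion that $I_{\mathrm{min}}$ is semiprime, and $L = T(\sqrt{I_{\mathrm{min}}}) = T(I_{\mathrm{min}})$ finishes everything.

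To prove $\sqrt{I_{\mathrm{min}}} \subseteq I_0$, I would assemble three ingredients. Write $C := \widetilde{A}[A,A]\widetilde{A}$ for the commutator ideal, which is semiprime by hypothesis. First, since $[A,L] \subseteq [A,A]$ we have $I_{\mathrm{min}} \subseteq C$, and semiprimeness of $C$ gives $\sqrt{I_{\mathrm{min}}} \subseteq \sqrt{C} = C$. Second, \autoref{prp:ISqrtI} records the inclusion $\widetilde{A}[A,A]\widetilde{A}\,\sqrt{I_{\mathrm{min}}} \subseteq I_0$, that is, $C\,\sqrt{I_{\mathrm{min}}} \subseteq I_0$. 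Third, a semiprime two-sided ideal in a \ca{} is idempotent by \cite[Theorem~5.2]{GarKitThi23arX:SemiprimeIdls}, so $\sqrt{I_{\mathrm{min}}} = \sqrt{I_{\mathrm{min}}}\,\sqrt{I_{\mathrm{min}}}$. Combining the three yields the desired inclusion in one line:
\[
\sqrt{I_{\mathrm{min}}}
= \sqrt{I_{\mathrm{min}}}\,\sqrt{I_{\mathrm{min}}}
\subseteq C\,\sqrt{I_{\mathrm{min}}}
\subseteq I_0 ,
\]
where the middle step applies $\sqrt{I_{\mathrm{min}}} \subseteq C$ to the left factor.

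I do not expect a serious obstacle: the genuinely delicate work — the commutator manipulations behind $C\,\sqrt{I_{\mathrm{min}}} \subseteq I_0$ and the idempotency of semiprime ideals — has already been carried out in \autoref{prp:ISqrtI} and in the Dixmier-ideal machinery of \cite{GarKitThi23arX:SemiprimeIdls}. The only new observation is that the semiprimeness hypothesis on the commutator ideal lets one absorb $\sqrt{I_{\mathrm{min}}}$ into $C$, after which the product inclusion combined with idempotency collapses the chain $I_0 \subseteq I_{\mathrm{min}} \subseteq \sqrt{I_{\mathrm{min}}}$. The one point to verify carefully is the sidedness: in \autoref{prp:ISqrtI} the factor $\sqrt{I_{\mathrm{min}}}$ sits on the right of $C$, and the absorption $\sqrt{I_{\mathrm{min}}}\,\sqrt{I_{\mathrm{min}}} \subseteq C\,\sqrt{I_{\mathrm{min}}}$ uses $\sqrt{I_{\mathrm{min}}} \subseteq C$ on the left factor, so the two are compatible and no $\widetilde{A}[A,A]\widetilde{A}\sqrt{I} = \sqrt{I}\,\widetilde{A}[A,A]\widetilde{A}$ symmetry is actually needed.
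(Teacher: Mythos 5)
Your proof is correct, and it takes a genuinely shorter route than the paper's. Both arguments rest on \autoref{prp:ISqrtI} and \cite[Theorem~5.2]{GarKitThi23arX:SemiprimeIdls}, and both end by collapsing the chain $I_0 \subseteq \Imin \subseteq \sqrt{\Imin}$, but they exploit the semiprimeness of the commutator ideal $C = \widetilde{A}[A,A]\widetilde{A}$ differently. The paper uses it to write $C = C^2$, combines $C\sqrt{\Imin} = \Imin\sqrt{\Imin}$ (from \autoref{prp:ISqrtI}) with $\Imin\sqrt{\Imin} = \sqrt{\Imin}\,\Imin$ (from \autoref{prp:WrappingIdl}) to obtain $\Imin\sqrt{\Imin} \subseteq \Imin^2$, and then passes to the Dixmier closure $K$ of $\Imin$, invoking \cite[Lemma~5.1]{GarKitThi23arX:SemiprimeIdls} to bootstrap $K^{3/2} \subseteq K^2$ into $K = K^2$, whence $K \subseteq I_0$ and $\Imin$ is idempotent; semiprimeness of $\Imin$ is then deduced from idempotency via the converse direction of \cite[Theorem~5.2]{GarKitThi23arX:SemiprimeIdls}. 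You instead use semiprimeness of $C$ only through minimality of the semiprime closure (so that $\Imin \subseteq C$ forces $\sqrt{\Imin} \subseteq C$), and you use idempotency of the semiprime ideal $\sqrt{\Imin}$ --- a fact the paper itself invokes in the proof of \autoref{prp:ISqrtI}, so no new dependency is created --- to get $\sqrt{\Imin} = \sqrt{\Imin}\sqrt{\Imin} \subseteq C\sqrt{\Imin} \subseteq I_0$ in one line, where $C\sqrt{\Imin} \subseteq I_0$ is exactly the inclusion $\widetilde{A}[A,A]\widetilde{A}\sqrt{\Imin} \subseteq I_0$ recorded in \eqref{eq:ISqrtI}. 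Your route bypasses \autoref{prp:WrappingIdl}, the Dixmier closure, and \cite[Lemma~5.1]{GarKitThi23arX:SemiprimeIdls} altogether, and concludes semiprimeness directly from $\Imin = \sqrt{\Imin}$ rather than via idempotency. The only byproduct of the paper's longer detour is the identification of $\Imin$ with its Dixmier closure along the way, and that follows anyway once $\Imin$ is known to be semiprime, so your streamlining loses nothing; your supporting reductions are also accurately sourced, since $L = T(\Imin)$ holds unconditionally by \autoref{prp:RangeOfIdeals}, while $I_0 \subseteq \Imin \subseteq \sqrt{\Imin}$ and $L = T(\sqrt{\Imin})$ come from \autoref{prp:ArisingFromIdeal}.
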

\begin{proof}
We abbreviate $I_{\mathrm{min}}$ to $I$ throughout this proof. 
Let $C$ denote the commutator ideal of~$A$.
We have $I \subseteq C$.
Since $C$ is semiprime, we have $C=C^2$ by \cite[Theorem~5.2]{GarKitThi23arX:SemiprimeIdls}, and thus $I \subseteq C^2$.
Using that $C\sqrt{I} = I\sqrt{I}$ by \autoref{prp:ISqrtI}, and using that $I\sqrt{I} = \sqrt{I}I$ by \autoref{prp:WrappingIdl}, we get
\[
I\sqrt{I}
\subseteq C^2\sqrt{I}
= CI\sqrt{I}
= C\sqrt{I}I
= I\sqrt{I}I
= I^2\sqrt{I}
\subseteq I^2.
\]

Let~$K$ denote the Dixmier closure of $I$.
Using \autoref{prp:ISqrtI}, we get
\[
K^{\frac{3}{2}}
\subseteq \bigcup_{\varepsilon>0} K^{1+\varepsilon}
= I\sqrt{I}
\subseteq I^2
\subseteq K^2.
\]
It follows from \cite[Lemma~5.1]{GarKitThi23arX:SemiprimeIdls} that $K$ is idempotent.
Using \autoref{prp:ISqrtI} again, it follows that
\[
K 
= K^2 
\subseteq \bigcup_{\varepsilon>0} K^{1+\varepsilon}
\subseteq I_0 
\subseteq I \subseteq K,
\]
and thus $I_0=I=I^2$.
Applying \cite[Theorem~5.2]{GarKitThi23arX:SemiprimeIdls}, it follows that $I$ is semiprime, and so $I=\sqrt{I}$.
Using \autoref{prp:ArisingFromIdeal}, we get
\[
T(I)=T(\sqrt{I})=L,
\]
as desired.
\end{proof}

%==========================================================================================
The following is a natural question.

%==========================================================================================
\begin{qst}
\label{qst:SemiprimeCommIdl}
Is the commutator ideal of every \ca\ semiprime?
\end{qst}

%==========================================================================================
For von Neumann algebras, we show in the next proposition that the answer is positive.

%==========================================================================================
\begin{prp}
\label{exa:VNA}
Let $M$ be a von Neumann algebra.
Then the commutator ideal in $M$ is semiprime.
\end{prp}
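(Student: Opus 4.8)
The plan is to reduce the claim about a general von Neumann algebra $M$ to a structural statement and then invoke the theory developed earlier. By \autoref{qst:SemiprimeCommIdl} being equivalent (via \cite[Theorem~5.2]{GarKitThi23arX:SemiprimeIdls}) to idempotency of the commutator ideal $C$, it suffices to show that $C=C^2$, or equivalently that the Dixmier closure $K=\exterior(C)$ is idempotent and agrees with $C$. The key structural input I would use is that in a von Neumann algebra, the commutator ideal is in fact \emph{closed}: commutator ideals in von Neumann algebras are generated by projections and are norm-closed, so $C$ coincides with a weakly closed (hence norm-closed) two-sided ideal.

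First I would recall that every norm-closed two-sided ideal $C$ in a \ca{} satisfies $C=C^2$; this is the classical fact that closed ideals are idempotent, following from the existence of an approximate unit (one factors $c = \lim c e_\lambda$ with $e_\lambda \in C_+$, giving $c \in \overline{C^2}=C^2$ for closed $C$). Thus the entire difficulty collapses into establishing that the commutator ideal of a von Neumann algebra is norm-closed.

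The main obstacle, and the heart of the argument, is therefore the closedness of $C$ in the von Neumann algebra setting. Here I would appeal to the structure theory of von Neumann algebras: $M$ decomposes as a direct sum (direct integral) of factors of types $\mathrm{I}$, $\mathrm{II}$, and $\mathrm{III}$, and the commutator ideal is compatible with central decomposition. For type $\mathrm{III}$ and properly infinite summands one has $M=[M,M]$ by results on commutators in infinite von Neumann algebras, so $C=M$ is trivially closed. The delicate case is the finite part, where the commutator subspace is governed by the center-valued trace $\tau$: the closure of $[M,M]$ is the kernel of $\tau$, and one must verify that the two-sided ideal generated by $[M,M]$ is already this closed kernel. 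I would use that in a finite von Neumann algebra the commutator ideal is determined by a central projection describing the support of the type decomposition, so that $C$ is of the form $zM$ for a central projection $z$, which is automatically weakly and norm closed.

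Once closedness is in hand, the proof concludes in one line: since $C$ is a closed two-sided ideal, $C=C^2$, and by \cite[Theorem~5.2]{GarKitThi23arX:SemiprimeIdls} an idempotent two-sided ideal is semiprime, so $C=\sqrt{C}$ and the commutator ideal is semiprime. I expect the write-up to spend essentially all its effort on the closedness claim (invoking the type decomposition and the behavior of commutators on finite versus properly infinite summands), with the semiprimeness then following formally from the idempotency criterion already recorded in \autoref{prp:SemiprimeCommIdl} and in \cite[Theorem~5.2]{GarKitThi23arX:SemiprimeIdls}.
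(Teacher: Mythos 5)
Your overall strategy coincides with the paper's: reduce semiprimeness of the commutator ideal $C$ to the structural fact that $C$ is norm-closed, indeed of the form $(1-z)M$ for a central projection $z$, and then conclude via the standard fact that closed (equivalently, idempotent) two-sided ideals in a \ca{} are semiprime. The paper's proof is exactly this, in two lines: it takes $z$ to be the largest central projection with $zM$ commutative, quotes as standard that $(1-z)M$ \emph{is} the commutator ideal, and invokes closed $\Rightarrow$ semiprime.

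However, the part you yourself call the heart of the argument --- the finite case --- is genuinely wrong as stated. You propose to ``verify that the two-sided ideal generated by $[M,M]$ is already this closed kernel'' of the center-valued trace $\tau$. This cannot be verified, because it is false: $\ker\tau$ is not a two-sided ideal at all. Already in $M_2(\mathbb{C})$ the element $E_{11}-E_{22}$ lies in $\ker\tau$ while $E_{11}(E_{11}-E_{22})=E_{11}$ does not; and in a $\mathrm{II}_1$ factor the two-sided ideal generated by $[M,M]$ is all of $M$ (a $\mathrm{II}_1$ factor is algebraically simple), not the proper subspace $\ker\tau$. The confusion is between the commutator \emph{subspace} $[M,M]$, whose norm closure is indeed described by $\tau$ in the finite case, and the commutator \emph{ideal}, which is the object of the proposition. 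The correct statement --- the one your final sentence of that paragraph asserts without proof, and which the paper records as a standard fact --- is that a von Neumann algebra with no abelian central summand equals its own commutator ideal; for the finite part this is seen, for instance, by writing the unit as a finite sum of products of commutators using matrix units and halving of projections uniformly across the central decomposition, not by any comparison with $\ker\tau$. A smaller gap: your parenthetical $c\in\overline{C^2}=C^2$ presupposes that $C^2$ is closed, which is essentially what is at stake; the clean route is the factorization fact that every element of a \ca{} is a product of two of its elements (so closed ideals satisfy $C=C^2$), or simply the well-known fact, noted in \autoref{rmk:Closed}, that closed two-sided ideals in a \ca{} are semiprime.
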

\begin{proof}
It is a standard fact that there is a largest central projection $z \in M$ such that $zM$ is commutative.
This yields a decomposition $M=zM\oplus(1-z)M$, and $(1-z)M$ is the commutator ideal.
In particular, the commutator ideal in $M$ is closed and therefore semiprime.
Thus, \autoref{prp:SemiprimeCommIdl} applies and shows that for every semiprime Lie ideal $L \subseteq M$, the two-sided ideal $M[M,L]M$ is semiprime and we have $L=T\big(M[M,L]M\big)$.
\end{proof}

%==========================================================================================
%==========================================================================================
\section{Lie ideals in \texorpdfstring{$C^*$}{C*}-algebras that are generated by commutators}
\label{sec:GenByComm}

%==========================================================================================
In this section, we specialize further to \ca{s} that are generated by their commutators as a (not necessarily closed) two-sided ideal. 
%where it was shown that a \ca{} $A$ is generated by its commutator subspace $[A,A]$ as a (not necessarily norm-closed) two-sided ideal if and only if $A$ is generated by its commutators as a ring, if and only if $A=[A,A]^2$, that is, every element in $A$ is a finite sum of products of pairs of commutators; see \cite[Theorem~4.1]{GarThi25GenByCommutators}.
For such a \ca{} $A$, we show that for every semiprime Lie ideal $L \subseteq A$ there is a unique (automatically semiprime) two-sided ideal whose full normalizer subspace is $L$;
see \autoref{prp:UniqueRealizingIdeal}.
It follows that there is a natural bijection between semiprime two-sided ideals in~$A$ and semiprime Lie ideals in~$A$;
see \autoref{prp:GenByCommutators}.

%==========================================================================================
\begin{lma}
\label{prp:SemiprimeQuotGenByCommutators}
Let $A$ be a \ca{} that is generated by its commutators as an ideal, and let $I$ be a semiprime two-sided ideal in $A$.
Then  $A/I$ has no nonzero, commutative two-sided ideals.
\end{lma}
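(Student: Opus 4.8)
The plan is to pass to the quotient and argue that a semiprime algebra generated by its commutators as an ideal admits no nonzero commutative two-sided ideal. Set $B := A/I$ and let $\pi \colon A \to B$ be the quotient map. Since $I$ is semiprime, $B$ is a semiprime ring, and as a complex algebra it has no $2$-torsion. Because $\pi$ is a surjective algebra homomorphism with $\pi([A,A]) = [B,B]$, it carries the two-sided ideal generated by $[A,A]$ onto the two-sided ideal generated by $[B,B]$; the former is all of $A$ by hypothesis, so the latter is all of $B$. Thus $B$ is again generated by its commutators as an ideal, and it suffices to show that every commutative two-sided ideal $J \subseteq B$ is zero.

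First I would show that $J$ is central. As a two-sided ideal, $J$ is in particular a Lie ideal, and commutativity gives $[J,J] = \{0\} \subseteq Z(B)$. Since $B$ is a semiprime ring without $2$-torsion, \cite[Lemma~1]{Her70LieStructure} then yields $J \subseteq Z(B)$; this is exactly the centrality step already exploited in the proof of \autoref{prp:Correspondence}.

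Next I would use centrality to make $J$ annihilate the commutator ideal. Since every element of $J$ is central, $[J,B] = \{0\}$. For $j \in J$ and $b_1, b_2 \in B$, centrality lets one rewrite $j[b_1,b_2] = [b_1 j, b_2]$, and as $b_1 j \in J$ this commutator lies in $[J,B] = \{0\}$; hence $J[B,B] = \{0\}$. Using centrality once more to move $J$ past the ambient factors, $J$ annihilates the entire two-sided ideal generated by $[B,B]$, which we identified with $B$. Therefore $JB = \{0\}$, so in particular $J^2 \subseteq JB = \{0\}$, and semiprimeness of $B$ forces the nilpotent ideal $J$ to vanish.

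The only nonformal ingredient is \cite[Lemma~1]{Her70LieStructure}, which promotes the purely algebraic relation $[J,J] = \{0\}$ to genuine centrality of $J$; after that, everything is a short commutator manipulation. The hypothesis that $A$ (and hence $B$) is generated by its commutators as an ideal enters exactly once, to identify the commutator ideal of $B$ with $B$, which is what turns $J^2 = \{0\}$ into the conclusion $J = \{0\}$ via semiprimeness. I do not anticipate a real obstacle beyond verifying that the commutator ideal passes correctly to the quotient.
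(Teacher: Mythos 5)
Your proof is correct, but it takes a genuinely different route from the paper's. The paper never invokes \cite[Lemma~1]{Her70LieStructure} here: it argues directly from commutativity of $J$ (not centrality), computing for $x,y \in J$ and $a,b \in A/I$ that $0 = [xa,yb] = xy[a,b]$, and then cites \cite[Theorem~4.1]{GarThi25GenByCommutators} --- a \ca{} generated by its commutators as an ideal is generated by them as a \emph{ring} --- to write any $z \in J$ as a finite sum of products of commutators, concluding $J^3 = \{0\}$ and hence $J = \{0\}$ by semiprimeness. You instead first upgrade $[J,J] = \{0\}$ to $J \subseteq Z(B)$ via Herstein's lemma, and centrality then lets you slide elements of $J$ past the ambient multipliers, so that $J$ annihilates the full two-sided ideal generated by $[B,B]$, i.e.\ all of $B$; this yields the stronger intermediate conclusion $JB = \{0\}$ (hence $J^2 = \{0\}$) rather than $J^3 = \{0\}$. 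The trade-off: your argument avoids the nontrivial $C^*$-theoretic input \cite[Theorem~4.1]{GarThi25GenByCommutators} entirely --- ideal-generation passes trivially to the quotient, as you note --- at the price of invoking the semiprime-ring machinery of \cite[Lemma~1]{Her70LieStructure}, which the paper deploys in its other results but deliberately keeps out of this one; the paper's computation, by contrast, is purely formal and uses semiprimeness only once, at the very last step, to kill the nilpotent ideal.
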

\begin{proof}
Let $J \subseteq A/I$ be a commutative two-sided ideal.
We show below that~$J$ is a nilpotent ideal in~$A/I$, specifically that $J^3 = \{0\}$.
Since $A/I$ is a semiprime ring, it contains no nonzero nilpotent ideals \cite[Proposition~10.16]{Lam01FirstCourse2ed}, and thus $J = \{0\}$.

Let $x,y,z \in J$.
Given $a,b \in A/I$, we have $xa, yb \in J$, which implies that they commute.
Using this at the first step, using at the third step that $xa$ and $y$ commute and that $yb$ and $x$ commute, and using at the fourth step that $x$ and $y$ commute, we get
\[
0
= [xa,yb]
= xayb - ybxa
= yxab - xyba
= xyab - xyba
= xy[a,b].
\]

Since $A$ is generated by its commutators as a ring by \cite[Theorem~4.1]{GarThi25GenByCommutators}, so is~$A/I$.
By writing $z$ as a finite sum of products of commutators and using the above, we conclude that $xyz=0$, as desired.
\end{proof}

%==========================================================================================
With the notation from \autoref{prp:RangeOfIdeals}, the next result shows that $\Imin(L)=\Imax(L)$ for every semiprime Lie ideal $L$ in a \ca{} that is generated by its commutators.

%==========================================================================================
\begin{thm}
\label{prp:UniqueRealizingIdeal}
Let $A$ be a \ca{} that is generated by its commutators as an ideal, and let $L$ be a semiprime Lie ideal in~$A$.
Then the two-sided ideal generated by $[A,L]$ is the unique two-sided ideal $I \subseteq A$ satisfying $L=T(I)$.
\end{thm}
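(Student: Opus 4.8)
The plan is to prove the claim by showing that the two distinguished ideals attached to $L$ in \autoref{prp:RangeOfIdeals} coincide, namely that $\Imin = \Imax$, where $\Imin$ is the two-sided ideal generated by $[A,L]$ and $\Imax$ is the largest two-sided ideal contained in $L$ (which exists by \autoref{prp:Imax}). Indeed, \autoref{prp:RangeOfIdeals} tells us that a two-sided ideal $I$ satisfies $L = T(I)$ if and only if $\Imin \subseteq I \subseteq \Imax$, so once the two ends of this interval agree, the ideal $\Imin$ is forced to be the unique two-sided ideal with $L = T(I)$, which is exactly the assertion.

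To carry this out, I would first record that the hypothesis makes the commutator ideal of $A$ equal to all of $A$, and $A$ is semiprime as a ring since a \ca{} has no nonzero nilpotent two-sided ideals. Hence \autoref{prp:SemiprimeCommIdl} applies and gives that $\Imin$ is itself semiprime and that $L = T(\Imin)$. With this in hand, the semiprimeness of $\Imin$ lets me invoke \autoref{prp:SemiprimeQuotGenByCommutators}, which says that the quotient $A/\Imin$ contains no nonzero, commutative two-sided ideal; this is the decisive input. The main step is then to upgrade the automatic inclusion $\Imin \subseteq \Imax$ to an equality: since $\Imax \subseteq L = T(\Imin)$, \autoref{prp:T-Ideal} shows that the image of $\Imax$ under the quotient map $\pi \colon A \to A/\Imin$ lies in the center of $A/\Imin$. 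Thus $\pi(\Imax)$ is a two-sided ideal of $A/\Imin$ contained in its center, hence a commutative two-sided ideal, and so it must vanish. This yields $\Imax \subseteq \Imin$, and therefore $\Imin = \Imax$. Combining this with the range description in \autoref{prp:RangeOfIdeals} completes the proof.

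I do not expect a serious obstacle here, as the argument mainly assembles results proved earlier in the paper. The one point demanding care is the very first reduction: in order to apply \autoref{prp:SemiprimeQuotGenByCommutators} to $\Imin$ \emph{directly} (rather than to its semiprime closure $\sqrt{\Imin}$), one genuinely needs to know that $\Imin$ is already semiprime, and this is not formal — it relies on \autoref{prp:SemiprimeCommIdl}, whose hypothesis is met precisely because the commutator ideal of $A$ equals $A$. Once that is secured, the remaining ``commutative ideal in the quotient is central, hence zero'' argument is routine.
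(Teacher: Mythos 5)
Your proof is correct and follows essentially the same route as the paper: both reduce the claim to showing $\Imin = \Imax$ via \autoref{prp:SemiprimeCommIdl} (applicable since the commutator ideal equals the semiprime ring $A$) and \autoref{prp:SemiprimeQuotGenByCommutators}, then conclude with \autoref{prp:RangeOfIdeals}/\autoref{prp:UniqueIdl}. The only cosmetic difference is that you verify commutativity of $\pi(\Imax)$ via centrality ($\Imax \subseteq T(\Imin)$ and \autoref{prp:T-Ideal}), whereas the paper notes directly that $[\Imax,\Imax] \subseteq [L,L] \subseteq \Imin$.
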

\begin{proof}
As in \autoref{prp:RangeOfIdeals}, let $\Imin$ denote the two-sided ideal in~$A$ generated by $[A,L]$, and let $\Imax$ denote the largest two-sided ideal in~$A$ that is contained in~$L$ (which exists by \autoref{prp:Imax}).
By assumption, the commutator ideal of~$A$ is~$A$ itself, and is thus semiprime.
Therefore, $\Imin$ is semiprime by \autoref{prp:SemiprimeCommIdl}.
Since $\Imax \subseteq L$ and since $[L,L] \subseteq \Imin$, we see that $\Imax/\Imin$ is an abelian two-sided ideal in~$A/\Imin$. 
Applying \autoref{prp:SemiprimeQuotGenByCommutators}, it follows that $\Imax/\Imin=\{0\}$, that is, $\Imin=\Imax$.
Now the result follows from \autoref{prp:UniqueIdl}.
\end{proof}

%==========================================================================================
\begin{cor}
\label{prp:GenByCommutators}
Let $A$ be a \ca{} that is generated by its commutators as 
an ideal.
Then the assignment $I \mapsto T(I)$ defines a natural bijection between semiprime two-sided ideals in~$A$ and semiprime Lie ideals in~$A$.

This correspondence restricts to a natural bijection between prime two-sided ideals in~$A$ and prime Lie ideals in~$A$.
\end{cor}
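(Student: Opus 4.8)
The plan is to deduce this corollary directly from \autoref{prp:Correspondence} by showing that, under the hypothesis that $A$ is generated by its commutators as an ideal, the auxiliary condition appearing in parts~(a) and~(a') of that theorem is automatically satisfied. Recall that \autoref{prp:Correspondence} establishes that $I \mapsto T(I)$ is a bijection, with explicit inverse $L \mapsto I_{\mathrm{max}}(L)$, between semiprime two-sided ideals $I$ such that $A/I$ contains no nonzero, commutative two-sided ideal and semiprime Lie ideals, and that it restricts to a bijection between the corresponding classes of prime ideals and prime Lie ideals. Thus it suffices to verify that for a \ca{} generated by its commutators, every semiprime (respectively, prime) two-sided ideal already satisfies this side condition.

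This is precisely the content of \autoref{prp:SemiprimeQuotGenByCommutators}: if $A$ is generated by its commutators as an ideal and $I \subseteq A$ is a semiprime two-sided ideal, then $A/I$ has no nonzero, commutative two-sided ideal. Consequently, the class of ideals described in part~(a) of \autoref{prp:Correspondence} coincides with the class of \emph{all} semiprime two-sided ideals in $A$, and the bijection of that theorem (together with its inverse, so that naturality is inherited) becomes the asserted bijection between semiprime two-sided ideals and semiprime Lie ideals.

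For the restriction to the prime case, I would observe that every prime two-sided ideal is in particular semiprime, so \autoref{prp:SemiprimeQuotGenByCommutators} again guarantees that its quotient contains no nonzero, commutative two-sided ideal. Hence the class described in part~(a') of \autoref{prp:Correspondence} is exactly the class of all prime two-sided ideals in $A$, and the restricted bijection there yields the desired correspondence between prime two-sided ideals and prime Lie ideals.

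Since all the substantive work has already been carried out in \autoref{prp:Correspondence} and \autoref{prp:SemiprimeQuotGenByCommutators}, I expect no real obstacle to remain; the only point to check is that the condition on the quotient is vacuous in this setting, which is exactly the cited lemma. I would therefore present the proof as a short combination of these two results.
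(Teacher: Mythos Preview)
Your argument is correct, but it follows a different route from the paper. The paper proves the corollary by citing \autoref{prp:RangeOfIdeals} for surjectivity and \autoref{prp:UniqueRealizingIdeal} for injectivity; the latter in turn relies on the Section~3 machinery (specifically \autoref{prp:SemiprimeCommIdl}) to show that $\Imin(L)=\Imax(L)$. Your approach instead combines \autoref{prp:Correspondence} directly with \autoref{prp:SemiprimeQuotGenByCommutators}, observing that the side condition on $A/I$ is vacuous when $A$ is generated by its commutators. This is more economical: it bypasses Section~3 and \autoref{prp:UniqueRealizingIdeal} entirely, since \autoref{prp:Correspondence} already supplies both injectivity (via $L\mapsto I_{\mathrm{max}}(L)$) and the prime case. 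What the paper's route buys, on the other hand, is the sharper conclusion of \autoref{prp:UniqueRealizingIdeal} that the realizing ideal is unique among \emph{all} two-sided ideals (not just semiprime ones) and is explicitly the ideal generated by $[A,L]$; your argument yields only the bijection stated in the corollary, with the inverse given by $I_{\mathrm{max}}$.
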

\begin{proof}
Surjectivity of the assignment is shown in \autoref{prp:RangeOfIdeals}.
Injectivity holds by \autoref{prp:UniqueRealizingIdeal}.
\end{proof}

%==========================================================================================
\begin{exa}
\label{exa:Simple}
Let $A$ be a unital, simple, nonabelian \ca{}.
Since the only two-sided ideals in $A$ are $\{0\}$ and~$A$ itself, and they are automatically semiprime, it follows from \autoref{prp:GenByCommutators} that the only semiprime Lie ideals in $A$ are
\[
T(\{0\}) = Z(A)=\mathbb{C}1_A, \andSep
T(A) = A.
\]
Moreover, the only prime Lie ideal in $A$ is $Z(A)=\mathbb{C}1_A$.

On the other hand, it follows from Herstein's description of Lie ideals in simple rings \cite{Her55LieJordanSimpleRing} that a subspace $L \subseteq A$ is a Lie ideal if and only if $L \subseteq Z(A)$ or $[A,A] \subseteq L$.
Further, if $A$ admits tracial states, then $[A,A] \neq A$ and $A$ contains Lie ideals that are not semiprime.
\end{exa}

%==========================================================================================
\begin{rmk}
\label{rmk:Closed}
While every closed two-sided ideal in a \ca{} is well-known to be semiprime, the analog for Lie ideals is not true.
For example, if $A$ is a unital, simple, noncommutative \ca{} with a tracial state, then the closure of $[A,A]$ is a closed Lie ideal that is not semiprime.
\end{rmk}

%==========================================================================================
\begin{exa}
\label{exa:ProInf}
Let $A$ be a unital, properly infinite \ca{}, and let $L \subseteq A$ be a Lie ideal.
Let $I := A[A,L]A$ be the two-sided ideal generated by $[A,L]$.
It was shown in \cite[Theorem~5.3]{Thi24arX:LiePropInf} that $L$ is \emph{embraced by $I$}, that is, we have
\[
[A,I] \subseteq L \subseteq T([A,I]).
\]
Further, $I$ is the unique two-sided ideal of $A$ that embraces $L$.

Letting $\mathcal{I}$ denote the family of all two-sided ideals in $A$, it follows that the collection $\mathcal{L}$ of Lie ideals in $A$ decomposes as a disjoint union
\[
\mathcal{L} = 
\bigsqcup_{I \in \mathcal{I}} 
\big\{ L \subseteq A \text{ subspace} : [A,I] \subseteq L \subseteq T([A,I]) \big\};
\]
see \cite[Corollary 5.4]{Thi24arX:LiePropInf}.
Which of these Lie ideals are (semi)prime?

Since $A$ is generated by its commutators as an ideal, \autoref{prp:GenByCommutators} applies and we see that a Lie ideal $L$ is (semi)prime if and only if $L=T(I)$ for some (semi)prime two-sided ideal $I$.

\textbf{Claim:} \emph{If $I$ is a two-sided ideal in $A$, then $T(I)=T([A,I])$.}
It suffices to verify that $T(I) \subseteq T([A,I])$.
Set $J=A[A,T(I)]A$. Since $[A,T(I)] \subseteq I$, we get $J \subseteq I$. 
Applying \cite[Theorem~5.3]{Thi24arX:LiePropInf} for the Lie ideal $T(I)$, we see that $T(I) \subseteq T([A,J])$, and thus
\[
T(I) 
\subseteq T([A,J])
\subseteq T([A,I]).
\]

It follows that the collection $\mathcal{L}$ of Lie ideals in $A$ decomposes as a disjoint union
\[
\mathcal{L} = 
\bigsqcup_{I \in \mathcal{I}} 
\big\{ L \subseteq A \text{ subspace} : [A,I] \subseteq L \subseteq T(I) \big\}.
\]
Consequently, a Lie ideal in $A$ is (semi)prime if and only if it is the largest subspace embraced by some (semi)prime two-sideal ideal in $A$.
\end{exa}

%==========================================================================================
\begin{thm}
\label{prp:CharFullyNoncentral}
Let $A$ be a \ca{} that is generated by its commutators as an ideal.
Then a Lie ideal in $A$ is fully noncentral if and only if it is not contained in any prime Lie ideal.
\end{thm}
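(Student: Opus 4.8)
The plan is to translate both conditions into statements about the two-sided ideal $\Imin$ generated by $[A,L]$, using the correspondence from \autoref{prp:GenByCommutators}, and then to isolate a single radical-theoretic fact as the crux. First I would record the two translations. By definition, $L$ is fully noncentral precisely when $\Imin = A$. On the other hand, by \autoref{prp:GenByCommutators} the prime Lie ideals in $A$ are exactly the subspaces $T(Q)$ with $Q$ a prime two-sided ideal, and for such $Q$ one has $L \subseteq T(Q)$ if and only if $[A,L] \subseteq Q$, that is, $\Imin \subseteq Q$. Hence $L$ is contained in some prime Lie ideal if and only if $\Imin$ is contained in some prime two-sided ideal, which happens if and only if $\sqrt{\Imin} \neq A$ (recall $\sqrt{\Imin}$ is the intersection of the prime two-sided ideals containing $\Imin$, so this collection is nonempty exactly when the intersection is proper). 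With these translations the theorem becomes the single equivalence $\Imin = A \iff \sqrt{\Imin} = A$.

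The easy implication is $\Imin = A \Rightarrow \sqrt{\Imin} = A$, immediate from $\Imin \subseteq \sqrt{\Imin}$; this already shows that a fully noncentral Lie ideal lies in no prime Lie ideal. For the converse I would argue contrapositively: assuming $\Imin \neq A$, I must produce a prime Lie ideal containing $L$. It suffices to find a prime two-sided ideal $Q \supseteq \Imin$, for then $A/Q$ is automatically nonabelian --- otherwise $[A,A] \subseteq Q$, and the commutator ideal, which is all of $A$ by hypothesis, would lie in $Q$, forcing $Q = A$ --- so that $T(Q)$ is a prime Lie ideal by \autoref{prp:FullNormalizer}(2), with $L \subseteq T(Q)$ because $[A,L] \subseteq \Imin \subseteq Q$.

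Thus everything reduces to the key claim that a proper two-sided ideal of the form $\Imin$ is contained in a prime two-sided ideal; most cleanly, that \emph{every} proper two-sided ideal $I$ of a \ca{} satisfies $\sqrt{I} \neq A$. When the norm-closure $\overline{I}$ is proper this is standard: $A/\overline{I}$ is a nonzero \ca{} and hence has a primitive ideal, whose preimage is a prime (indeed primitive) two-sided ideal containing $I$. The main obstacle is therefore the case in which $I$ is dense but proper, which genuinely occurs (for instance when $\Imin$ has the form $M_n(C_c(X))$, where no nonzero multiple of a slowly decaying function escapes but $\sqrt{I}$ is nonetheless proper). Here I would invoke the theory of powers of Dixmier ideals from \autoref{pgr:DixmierIdls}: writing $J = \interior(I)$, one has $\sqrt{I} = \bigcup_{\varepsilon>0} J^{\varepsilon}$ by \cite[Theorem~5.9]{GarKitThi23arX:SemiprimeIdls}, so $\sqrt{I} = A$ would force every positive $a \in A$ to satisfy $a^{s} \in J$ for some $s>0$; using that $J$ is hereditary together with a functional-calculus truncation argument (replacing $a^s$ by $f_\delta(a)$ for $f_\delta$ supported near the top of the spectrum), the goal is to upgrade the resulting density of $J$ to the equality $J = A$, and hence $I = A$.

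Verifying this last step --- that the full radical cannot be attained by a proper ideal --- is where I expect the real work to lie; the surrounding reductions are formal. Finally I would note that the hypothesis that $A$ is generated by its commutators is essential: for commutative $A$ no Lie ideal is fully noncentral (as $\Imin = \{0\}$), while there are no prime Lie ideals at all, so every Lie ideal is vacuously contained in none, and the biconditional fails.
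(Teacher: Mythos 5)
Your structural reductions are correct and essentially reproduce the paper's own route: you translate full noncentrality into $\Imin=A$, use the bijection of \autoref{prp:GenByCommutators} (the paper instead quotes \autoref{prp:ArisingFromIdeal}, an immaterial difference) to see that containment in a prime Lie ideal is the same as $\Imin\subseteq Q$ for some prime two-sided ideal $Q$, and in the converse direction you correctly note that any prime two-sided ideal $Q\supseteq\Imin$ suffices, since $A/Q$ is then nonabelian (the commutator ideal being all of $A$), so that $T(Q)$ is a prime Lie ideal by \autoref{prp:FullNormalizer}(2) and $L\subseteq T(Q)$.

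The genuine gap is the step you yourself flag as ``where the real work lies'': that a proper two-sided ideal $I$ of a \ca{} always satisfies $\sqrt{I}\neq A$, equivalently is contained in a prime two-sided ideal. This is not a formal consequence of the Dixmier-ideal calculus, and your proposed truncation argument cannot close it. From $\sqrt{I}=\bigcup_{\varepsilon>0}\interior(I)^{\varepsilon}=A$ you do get that every $a\in A_+$ has some power in $J:=\interior(I)$, and hereditariness together with functional calculus (using an inequality of the form $f_\delta(a)\leq\lambda_\delta\, a^{s}$) does place every spectral truncation $f_\delta(a)$ in $J$; but this only shows that $J$ is a \emph{dense} two-sided ideal, and dense proper two-sided ideals are ubiquitous in \ca{s} (e.g.\ $C_c(X)\subseteq C_0(X)$, or the Pedersen ideal of any nonunital \ca{}), so there is no general principle upgrading density to equality --- that upgrade is exactly the nontrivial statement to be proved, restated. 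What is needed at this point is precisely \cite[Theorem~A]{GarThi24PrimeIdealsCAlg}, which asserts that every proper, not necessarily closed, two-sided ideal of a \ca{} is contained in a prime ideal; this is a substantive theorem (the main result of that paper), and it is exactly what the paper's proof invokes here. With that citation in place of your sketch, your argument is complete and agrees with the paper's. (Your closing observation that the commutator hypothesis is essential --- a commutative \ca{} has no fully noncentral Lie ideals and no prime Lie ideals --- is correct.)
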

\begin{proof}
Let $L$ be a Lie ideal in $A$.
For the forward direction, assume that $L$ is fully noncentral.
To reach a contradiction, assume that $K$ is a prime Lie ideal containing~$L$.
Let $I$ is the semiprime closure of the two-sided ideal generated by $[A,K]$. By \autoref{prp:ArisingFromIdeal}, we have $K=T(I)$.
Since $L$ is fully noncentral, the two-sided ideal generated by $[A,L]$ is all of $A$.
Since $[A,L] \subseteq [A,K]$, it follows that $I=A$ and so $K=A$, a contradiction.

Conversely, assume that $L$ is not fully noncentral.
Let $I$ denote the two-sided ideal generated by $[A,L]$.
By assumption, $I \neq A$.
Applying \cite[Theorem~A]{GarThi24PrimeIdealsCAlg}, we obtain a prime ideal $J \subseteq A$ such that $I \subseteq J$.
It follows that $[A,L] \subseteq I \subseteq J$, and therefore $L \subseteq T(J)$.
Note that $A/J$ is nonabelian since it is nonzero (since $J \neq A$) and since $A/J$ has no nonzero, commutative two-sided ideals by \autoref{prp:SemiprimeQuotGenByCommutators}.
Thus, it follows from \autoref{prp:FullNormalizer} that $T(J)$ is a prime Lie ideal.
\end{proof}

%==========================================================================================
%==========================================================================================
%\bibliographystyle{aomalphaMyShort}
%\bibliography{References}

\providecommand{\bysame}{\leavevmode\hbox to3em{\hrulefill}\thinspace}
\providecommand{\noopsort}[1]{}
\providecommand{\mr}[1]{\href{http://www.ams.org/mathscinet-getitem?mr=#1}{MR~#1}}
\providecommand{\zbl}[1]{\href{http://www.zentralblatt-math.org/zmath/en/search/?q=an:#1}{Zbl~#1}}
\providecommand{\jfm}[1]{\href{http://www.emis.de/cgi-bin/JFM-item?#1}{JFM~#1}}
\providecommand{\arxiv}[1]{\href{http://www.arxiv.org/abs/#1}{arXiv~#1}}
\providecommand{\doi}[1]{\url{http://dx.doi.org/#1}}
\providecommand{\MR}{\relax\ifhmode\unskip\space\fi MR }
% \MRhref is called by the amsart/book/proc definition of \MR.
\providecommand{\MRhref}[2]{%
  \href{http://www.ams.org/mathscinet-getitem?mr=#1}{#2}
}
\providecommand{\href}[2]{#2}

\end{document}